\theoremstyle{plain}
\newtheorem{thm}{Theorem}[section]
\newtheorem{lemma}[thm]{Lemma}
\newtheorem{coroll}[thm]{Corollary}
\theoremstyle{remark}
\newtheorem{rmk}[thm]{Remark}
\theoremstyle{definition}
\newtheorem{defn}[thm]{Definition}
\newtheorem{ldefn}[thm]{Lemma-definition}
\newcommand{\mr}{\mathrm}
\newcommand{\mbb}{\mathbb}
\newcommand{\efer}{E^{\mr{fer}}}
\newcommand{\eaf}{E^{\mr{af}}}
\definecolor{bluetto}{RGB}{22, 96, 186}
\definecolor{rossino}{RGB}{186, 22, 47}
\newcommand\Ccancel[2][gray]{
	\let\OldcancelColor\CancelColor
	\renewcommand\CancelColor{\color{#1}}
	\cancel{#2}
	\renewcommand\CancelColor{\OldcancelColor}
}
\newcommand{\lroungle}{\mathrel{\mkern-4mu 
		\raisebox{-2.1pt}{%
			\tikz[line cap=round, line join=round]
			\draw
			(0ex, 2.2ex) to[out=-135,in=135, distance=7] 	 (0ex,0ex)
			;%
		}\mkern-3mu}
}
\newcommand{\rroungle}{%
	\mathrel{\mkern-4mu\raisebox{-2.1pt}{%
			\tikz[line cap=round, line join=round]
			\draw
			(0ex, 2.2ex) to[out=-45,in=45, distance=7] 	 (0ex,0ex)
			;%
	}}
}
\DeclareDocumentCommand\expct{s m}
{
	\IfBooleanTF{#1}
	{\mkern0mu\lroungle\mkern0mu #2 \mkern0mu\rroungle\mkern0mu}
	{\stretchleftright{\mkern1.5mu\lroungle\mkern4mu}{#2\vphantom{\raisebox{-.4pt}{\scalebox{1.2}{$#2$}} }}{\mkern4mu\rroungle\mkern0mu}}
}
\DeclareDocumentCommand\expctscrpt{s m}
{
	\IfBooleanTF{#1}
	{\mkern0mu\lroungle\mkern0mu #2 \mkern0mu\rroungle\mkern0mu}
	{\stretchleftright{\mkern2.5mu\lroungle\mkern2.5mu}{#2\vphantom{\raisebox{-.4pt}{\scalebox{0.9}{$#2$}} }}{\mkern2.5mu\rroungle\mkern-4mu}}
}
\DeclareDocumentCommand\brac{ s m t\chet s g }
{ 
	\IfBooleanTF{#3}
	{ 
		\IfBooleanTF{#1}
		{ 
			\IfNoValueTF{#5}
			{\bracchet*{#2}{} \IfBooleanTF{#4}{*}{}}
			{\bracchet*{#2}{#5}}
		}
		{
			\IfBooleanTF{#4}
			{ 
				\IfNoValueTF{#5}
				{\bracchet{#2}{} *}
				{\bracchet*{#2}{#5}}
			}
			{\bracchet{#2}{\IfNoValueTF{#5}{}{#5}}} 
		}
	}
	{ 
		\IfBooleanTF{#1}
		{\lroungle \smash{#2} \rvert}
		{\stretchleftright{\lroungle\mkern2.5mu}{#2}{\rvert}}
		\IfBooleanTF{#4}{*}{}
		\IfNoValueTF{#5}{}{#5}
	}
}
\DeclareDocumentCommand\chet{ s m }
{ 
	\IfBooleanTF{#1}
	{\vphantom{#2}\lvert\smash{#2}\rroungle} 
	{\stretchleftright{\lvert}{#2}{\mkern2.5mu\rroungle}} 
}
\DeclareDocumentCommand\prodottointerno{ s m g }
{ 
	\IfBooleanTF{#1}
	{ 
		\IfNoValueTF{#3}
		{\vphantom{#2}\lroungle\smash{#2}\vert\smash{#2}\rroungle}
		{\vphantom{#2#3}\lroungle\smash{#2\mkern1mu}\vert\smash{\mkern.5mu#3}\rroungle}
	}
	{ 
		\IfNoValueTF{#3}
		{\stretchleftright{\lroungle}{#2}{\vert} #2 \stretchrel*{\rroungle}{\vphantom{#2}}}
		{\stretchleftright{\lroungle}{\mkern4mu #2 \vphantom{#3}}{\vert} {#3\mkern4.5mu}\stretchrel*{\rroungle}{\vphantom{#2#3}}}
	}
}
\DeclareDocumentCommand\bracchet{}{\prodottointerno}
\newcommand\restr[2]{{
		\left.\kern-\nulldelimiterspace 
		#1 
		\vphantom{\Big|} 
		\right|_{#2} 
}}
\title{Modulated phases in Ising systems with long-range antiferromagnetic and short-range ferromagnetic interactions}
\author{Andrea Braides\footnote{present address: Department of Mathematics, University of Rome Tor Vergata, via della ricerca scientifica 1, Rome, Italy } \ and Fabrizio Caragiulo \\
SISSA, via Bonomea 265, Trieste, Italy}
\date{}
\begin{document}

\maketitle

\section{Introduction}
We consider large spin systems with short-range ferromagnetic interactions and long-range antiferromagnetic interactions subjected to periodic boundary conditions. Such systems are modeled by an energy of the form
\begin{equation}\label{energy}\begin{split}     
        E_N(\underline{\sigma})&\coloneqq \overbrace{-J \sum_{i=1}^N\sigma_i\sigma_{i+1}}^{\efer_N(\underline{\sigma})} +\overbrace{\sum_{i=1}^N\sum_{j\in \mbb Z,j\neq i}\frac{\sigma_i\sigma_j}{|i-j|^p}}^{\eaf_N(\underline{\sigma})},
    \end{split}\end{equation}
    with $p>1$ and $J>0$,
with a ferromagnetic part $\efer_N$ interacting with an antiferromagnetic part $\eaf_N$, depending on a $N$-periodic spin state $\underline{\sigma}:\mbb Z\to\{-1,1\}$ such that $\sigma_{i+N}=\sigma_i$. 
It has been shown by Giuliani, Lebowitz and Lieb \cite{GLL} that, under suitable conditions on $J$ and $p$, as the size of the system $N$ diverges, minimizers of the system tend to alternate groups of $1$ and $-1$ of the same length $h^\star $ (or, in some exceptional cases of lengths either $h^\star $ or $h^\star +1$) for some unique $h^\star $ determined by $p$ and $J$. Hence, if we let $e(h^\star )$ denote the energy per site of this periodic state when $N=2h^\star $, then we have that
\begin{equation}\label{energy-1}
\min E_N= N e(h^\star ) + O(1),
\end{equation}
the remainder being $0$ when $N$ is a multiple of $h^\star $. 

In this paper we give an asymptotic description of the states $\underline{\sigma}$ such that $E_N(\underline{\sigma})- N e(h^\star )=O(1)$, by computing the $\Gamma$-limit of these normalized energies 
\begin{equation}\label{energy-2}
F_N(\underline{\sigma})=E_N(\underline{\sigma})- N e(h^\star ).
\end{equation}
To that end, we first prove that if $F_N(\underline{\sigma}^N)$ is equibounded then the domain of  $\underline{\sigma}^N$ can be decomposed into alternate arrays where $\underline{\sigma}^N$ take the value $1$ and $-1$ all of length $h^\star $ except for a number of indices equibounded with $N$. Hence, locally such $\underline{\sigma}^N$ will be a translation of the $2h^\star $-periodic function $\underline{\sigma}^\star $ with $\underline{\sigma}^\star _i=-1$ if $i\in\{1,\ldots, h^\star \}$ and $\underline{\sigma}^\star _i=1$ if $i\in\{h^\star +1,\ldots, 2h^\star \}$.

 In order to describe the asymptotics of such systems, it is convenient to scale the domain $\mbb Z$ by $\frac1N$, so that each $\underline{\sigma}^N$ is identified as a $1$-periodic spin function defined on $\frac1N\mbb Z$. Up to considering subsequences, we can find a finite set of points $x_1,\ldots,x_K\in(0,1]$ and integers $r_1,\ldots,r_{K+1}\in\{1,\ldots, 2h^\star \}$ with $r_k\neq r_{k+1}$, such that, setting $x_{K+1}=x_1+1$, if $\frac{i}N$ lies in a compact interval of $(x_k, x_{k+1})$ we have $\underline{\sigma}^N_i=\underline{\sigma}^\star _{i+r_k}$. In this case, we define the limit of $\underline{\sigma}^N$ as the $1$-periodic piecewise-constant function $r$ such that $r(x)= r_k$ if $x\in (x_k, x_{k+1})$. This description highlights that a finite number of modulated phases obtained as a translation of the `absolute' ground state $\underline{\sigma}^\star $ can coexists with a bounded extra energy from the minimal one. The energy transition between two neighbouring modulated phases can actually be computed in terms of the difference of the corresponding translations so that we may characterize a function $\phi$ such that the energies $F_N$ $\Gamma$-converge to the functional
 \begin{equation}\label{energy-3}
F_\infty(r)=\sum_{x\in J(r)\cap (0,1]} \phi(r(x^+)-r(x^-)),
\end{equation}
defined on $1$-periodic functions $r$ that are piecewise constant on bounded intervals, where $J(r)$ denotes the set of discontinuity points of $r$. The function $\phi (j)$ is the minimal energy density of a defect of size $j$ (modulo $2h^\star $), and is simply defined as the limit for $N\to+\infty$ and $N=j$ modulo $2h^\star $ of the renormalized minimal energies $\min E_N- Ne(h^\star )$.

For an analogous variational justification of modulated phases for antiferromagnetic systems with long, but not infinite, range we refer to \cite[Chapter 7]{ABCS}. In analogy with that analysis, for lattice systems in dimension $d$ higher than one we expect a limit description with $2dh^\star $ parameters and a limit partition of the reference set into sets of finite perimeter; e.g.~in dimension $2$ we expect a limit description with $4h^\star $ parameters, with $2h^\star $ parameters for vertical and $2h^\star $ parameters for horizontal stripes. The boundaries between two sets parameterized by two variants of horizontal (or two variants of vertical) stripes correspond to the one-dimensional anti-phase boundaries, while boundaries between two sets parameterized by stripes of different directions correspond to phase boundaries between different textures. We note that energies on partitions into sets of finite perimeter are much more complex than perimeter functionals since they need to satisfy $BV$-ellipticity conditions \cite{AB2}. We refer to \cite{ABC} for a direct computation of a partition energy derived from next-to-nearest neighbour antiferromagnetic interactions. In this perspective, it may be interesting to extend our study to higher-dimensional versions of \cite{GLL}, such as in \cite{GS}, or continuum approximations as in \cite{DR}, where the number of parameters may reduce to only $d$, corresponding to the different orientation of stripes. For a study of one-dimensional systems of nonlinear elastic lattice interactions exhibiting antiphase boundaries we refer to \cite{BCST} (see also \cite{BVZ})

\section{Preliminaries: properties of ground states}

Let $p>1$ and $ J>0$ be fixed. 
We will  consider the Hamiltonian $E_N$ in \eqref{energy} as defined on spin states on 
 some finite lattice $\Lambda_N=\mbb Z/N\mbb Z$,  that is on $\underline \sigma  \in \{\pm 1\}^{\Lambda_N}$.
 In this case, we may write
    \[\begin{split}
        E_N(\underline{\sigma})&\coloneqq \overbrace{-J \sum_{i\in \Lambda_N}\sigma_i\sigma_{i+1}}^{\efer_N(\underline{\sigma})} +\overbrace{\sum_{i,j\in \Lambda_N, i\ne j}\sum_{n\in \mbb Z}\frac{\sigma_i\sigma_j}{|i-j+nN|^p}+\frac{2}{N^{p-1}} \sum_{n= 1}^\infty\frac{1}{n^p}
       } ^{\eaf_N(\underline{\sigma})}.
       \color{red} 
    \end{split}\]
The last term is the part of the energy deriving from the sum of the interactions of each site $i\in\{1,\ldots,N\} $ with the sites $j$ with $j-i\in N\mbb Z\setminus\{0\}$, which is independent of $i$ since $\sigma_i\sigma_j=1$ if $j-i\in N\mbb Z$
independently of the sign of $\sigma_i$. Since this part is independent of $\underline\sigma$ it does not influence the minimization of $E_N$; however, it is convenient to maintain it for the asymptotic analysis of $E_N$. Indeed, if $\underline \sigma  \in \{\pm 1\}^{\Lambda_N}$ and $M\in\mathbb N$, then by definition we have
\begin{equation}\label{5}
E_{MN}(\underline \sigma)= ME_{N}(\underline \sigma),
\end{equation}
where in the first term $\underline \sigma$ (or more precisely, its $N$-periodic extension) is considered as an element of $ \{\pm 1\}^{\Lambda_{MN}}$. Note however that the difference between the definitions of the energies with or without the last term vanishes uniformly as $N\to+\infty$.

In this section we gather some results of Giuliani, Lebowitz and Lieb \cite{GLL} on minimizers for $E_N$.

\subsection{Representation in terms of arrays of equal sign}
Each spin state $\underline{\sigma}$ can be  identified, up to a discrete translation $\tau\in \mbb Z/N\mbb Z$, with a minimal array of lengths of its sets of consecutive equal spins (taking into account periodicity); e.g. if $N=8$ and \[\underline{\sigma}\colon (1,2,\ldots,8)\mapsto (1,1,-1,-1,1,-1,-1,1)\] then $\underline{\sigma}$ is equivalent, in terms of the energy, to \[\underline{\sigma}^0\colon(1,2,\ldots,8)\mapsto (1,1,1,-1,-1,1,-1,-1),\] after a translation of $\tau$ (that is, $\sigma_i=\sigma^0_{i+\tau}$) with $\tau=1$, and hence is described by four sets of size $3,2,1,2$ respectively and by the translation $1$. The original subdivision of $\underline{\sigma}$ in sets of sizes $2,2,1,2,1$ is made of five sets (and not four) and hence is not minimal.  Since translations do not affect the energy, we can then rewrite $E_N$ as a Hamiltonian on  such arrays instead of spins; with an abuse of notation, we write
\[ E_N(\underline{h})=E_N({\underline\sigma})\]
if $\underline{h}=(h_1,\ldots,h_M)$ is such that $h_1+\ldots+h_M=N$ and ${\sigma}$ is any spin state with  $\underline{h}$ as the corresponding array (up to translations). 
We note that if $M> 1$, then $M$ is even.

Given $h\in\mathbb N$ we can consider the configuration with an array of $1$s  of length $h$ one of $-1$s of equal length with energy $E_{2h}(h,h)$. We define the \emph{$h$-periodic energy per site} as
    \begin{equation} \label{enpersite}
     e(h)\coloneqq \frac{ E_{2h}(h,h)}{2h}
\end{equation}
If we consider the (periodic) configuration with arrays of $1$s and $-1$s of equal length $h$ for all even $M$ the energy $E_{Mh}(h\ldots,h)$ is proportional to $M$ by \eqref{5}, 
so that 
    \begin{equation}
    e(h)=\frac{ E_{Mh}(h,\ldots,h)}{Mh} 
\end{equation}
for all (even) $M$.

The following lemma describes the optimal $h$ for the  $h$-periodic energy per site. If $p>2$ we set
    \begin{equation}
J_p\coloneqq \frac{1}{\Gamma(p)}\int_0^{+\infty}\frac{\alpha^{p-1 }\mr e^{-\alpha}}{(1-\mr e^{-\alpha})^2} {\mr d\alpha}
\end{equation}

\begin{lemma} \label{lemma:eh} If $1<p\le 2$ and $J$ is arbitrary or if $p>2$ and $J<J_p$, then
     $e$ attains its minimum on $\mbb N$  at most two different points $h^\star , h^\star  +1$.
     If $p>2$ and $J>J_p$ then $e$ is always decreasing.
\end{lemma}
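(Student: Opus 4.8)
The plan is to reduce the whole statement to the monotonicity of one auxiliary sequence built from the first differences of $e$. First I would make $e(h)$ explicit, writing $e(h)=e^{\mr{fer}}(h)+e^{\mr{af}}(h)$. The ferromagnetic part is elementary: a $2h$-periodic alternating configuration has exactly two domain walls per period, so $\efer_{2h}(h,h)=-J(2h-4)$ and hence $e^{\mr{fer}}(h)=-J+\tfrac{2J}{h}$. The antiferromagnetic part is $e^{\mr{af}}(h)=\sum_{m\ge 1}C_h(m)\,m^{-p}$, where $C_h(m)=\langle\sigma_i\sigma_{i+m}\rangle$ is the period-averaged two-point correlation of the alternating state; for this state $C_h$ is the triangular wave equal to $1-\tfrac{2m}{h}$ on $0\le m\le h$, extended evenly and $2h$-periodically, so that $1-C_h(m)=\tfrac{2}{h}\,\mr{dist}(m,2h\mbb Z)$ and $e^{\mr{af}}(h)=\zeta(p)-\tfrac{2}{h}\sum_{m\ge1}\mr{dist}(m,2h\mbb Z)\,m^{-p}$.

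I would then pass to first differences. Since $e^{\mr{fer}}(h+1)-e^{\mr{fer}}(h)=-\tfrac{2J}{h(h+1)}$, setting
\[ g(h)\coloneqq h(h+1)\bigl(e^{\mr{af}}(h+1)-e^{\mr{af}}(h)\bigr) \]
gives $e(h+1)-e(h)=\tfrac{1}{h(h+1)}\bigl(g(h)-2J\bigr)$, so that $\operatorname{sign}\bigl(e(h+1)-e(h)\bigr)=\operatorname{sign}\bigl(g(h)-2J\bigr)$. The lemma then follows from two facts: (i) $g$ is strictly increasing, and (ii) $g(h)\to 2J_p$ as $h\to+\infty$ when $p>2$, while $g(h)\to+\infty$ when $1<p\le2$. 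Indeed, (i) forces $g(\cdot)-2J$ to change sign at most once, and only from negative to positive, so $e$ is first strictly decreasing and then strictly increasing, with at most one index $h$ at which $g(h)=2J$; hence, whenever a minimizer exists, it is either a single $h^\star$ or a consecutive pair $h^\star,h^\star+1$. A minimizer exists iff $g(h)>2J$ for some $h$, i.e. iff $\sup_h g(h)>2J$: by (ii) this holds for every $J$ when $1<p\le2$, and for $p>2$ it holds iff $J<J_p$, whereas for $p>2$, $J\ge J_p$ one has $g(h)<2J_p\le 2J$ for all $h$, so $e$ is strictly decreasing.

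The crux is therefore (i)–(ii). Here I would use the Mellin-type representation $m^{-p}=\tfrac{1}{\Gamma(p)}\int_0^{+\infty}t^{p-1}\mr e^{-mt}\,\mr dt$ to write $e^{\mr{af}}(h)=\tfrac{1}{\Gamma(p)}\int_0^{+\infty}t^{p-1}S_h(t)\,\mr dt$ with $S_h(t)=\sum_{m\ge1}C_h(m)\mr e^{-mt}$. Because $C_h$ is piecewise linear and $2h$-periodic, $S_h(t)$ sums to an explicit rational function of $\mr e^{-t}$ (assembled from $(1-\mr e^{-ht})^2$ and $1-\mr e^{-2ht}$); differencing in $h$ under the integral sign then reduces both claims to a single pointwise statement about the resulting $t$-integrand. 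For the limit I would rescale $t=\alpha/h$: the $h$-independent term $\zeta(p)$ cancels in $e^{\mr{af}}(h+1)-e^{\mr{af}}(h)$, and the surviving triangular defect $\tfrac2h\,\mr{dist}(m,2h\mbb Z)$ produces, after rescaling, the kernel $\tfrac{\mr e^{-\alpha}}{(1-\mr e^{-\alpha})^2}$; using $\tfrac{\mr e^{-\alpha}}{(1-\mr e^{-\alpha})^2}=\sum_{n\ge1}n\,\mr e^{-n\alpha}$ this collapses $g(h)$ onto twice the integral defining $J_p$ (the factor $2$ matching the one in $2J$). Since that integral converges exactly for $p>2$ and diverges for $1<p\le2$ — which is precisely why $J_p$ is introduced only for $p>2$ — one obtains $g(h)\to 2J_p$ in the former case and $g(h)\to+\infty$ in the latter.

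I expect the real difficulty to lie in the strict monotonicity (i): the inequality $g(h+1)>g(h)$ is a strict-positivity statement for a discrete second difference of $e^{\mr{af}}$, and controlling its sign uniformly in $h$ — either from the closed form of $S_h$ or, equivalently, from the monotonicity in $h$ of the rescaled integrand at each fixed $t$ — is where essentially all the work sits. By contrast, once the cancellation of the constant term $\zeta(p)$ is exploited, the evaluation of $\lim_h g(h)$ is a routine Abel/Mellin computation, and the translation of (i)–(ii) into the dichotomy of the statement is immediate.
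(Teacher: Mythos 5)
Your reduction of the lemma to the two facts (i) $g$ strictly increasing and (ii) $g(h)\to 2J_p$ (resp.\ $+\infty$) is correct, and both facts are in fact true; but as written the proposal has a genuine gap: fact (i) is never proved. You explicitly defer it (``where essentially all the work sits''), yet it is not a technicality — it \emph{is} the lemma. All the surrounding steps (the sign bookkeeping $e(h+1)-e(h)=\tfrac{1}{h(h+1)}(g(h)-2J)$, and the translation of (i)--(ii) into ``at most two consecutive minimizers'' and into the $J\lessgtr J_p$ dichotomy) are elementary; the convexity-type information about the long-range kernel has to enter precisely in (i), and no argument for it is offered. Fact (ii) is also only sketched, and for $1<p\le 2$ it is not as ``routine'' as claimed: writing $\Phi(h)\coloneqq\frac{1}{\Gamma(p)}\int_0^{+\infty}\frac{\alpha^{p-1}\mr e^{-\alpha}}{(1-\mr e^{-\alpha})^2}\tanh(\alpha h/2)\,\mr d\alpha$, one has $g(h)=2\Phi(h)-2h\big(\Phi(h+1)-\Phi(h)\big)$, and for $1<p<2$ both terms are of the same order $h^{2-p}$, so the divergence of $g$ rests on a partial cancellation whose surviving coefficient is proportional to $p-1$; this needs to be checked, not just asserted.

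The good news is that your own closed form fills the main gap in a few lines, so the approach is salvageable. Summing the triangular wave gives $e^{\mr{af}}(h)=\zeta(p)-\tfrac{2}{h}\Phi(h)$ with $\Phi$ as above, hence $g(h)=2\big((h+1)\Phi(h)-h\Phi(h+1)\big)$ and
\[
g(h+1)-g(h)=2(h+1)\big(2\Phi(h+1)-\Phi(h)-\Phi(h+2)\big)>0,
\]
since for each fixed $\alpha>0$ the map $h\mapsto\tanh(\alpha h/2)$ is strictly concave and $\Phi$ is its integral against a positive measure; the limits in (ii) then follow from monotone/dominated convergence in the same representation. It is worth noting that this is a different mechanism from the paper's. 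The paper does not work with discrete differences at all: it observes that the closed formula defines $e$ for every real $h>0$, computes $e'$ and $e''$, and shows that at any critical point $e''(h^\star)>0$ — the terms involving $\tilde e(h^\star)$ cancel there thanks to the identity expressing $e'(h)$ through $-\tilde e(h)/h$ minus a positive integral — which forces at most one critical point, hence first strict decrease and then strict increase, and the role of $J_p$ appears in the sign of $e'$ at infinity. Your discrete second-difference/concavity route is arguably more elementary (no differentiation under the integral sign), but until (i) is actually proved the proposal is an outline of a proof, not a proof.
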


\subsection{Asymptotic description of ground states}
The following theorem describes the behaviour of ground states for parameters for which $e$ has a unique minimizer. A slightly more complex statement holds when $e$ has two minimizers.

\begin{thm}[Ground state energy asymptotics, energy gap and ground states] \label{thm:asympt-1}Let $p>1$ and $J>0$ be fixed with either $1<p\le 2$ and $J$ is arbitrary or $p>2$ and $J<J_p$, and assume that $e$ in \eqref{enpersite} has a unique minimizer $h^\star $.
 Let $$E_N^\star =\min\{ E_N(\underline{\sigma}): \underline{\sigma}\in \{\pm 1\}^{\Lambda_N}\}$$ be the ground state energy on $\Lambda_N$.  Then the following statements hold.

\smallskip
{\rm (a) (General lower bound)} There exists a constant $c$ such that
     \begin{equation*} E_N(\underline{\sigma})\ge\sum_\mu h_\mu e(h_\mu)-cN^{-p} 
   \end{equation*}
for every $N$ and for every state $\underline{\sigma}$ with corresponding array $\{h_\mu\}$.
   
\smallskip
{\rm (b) (Asymptotics)}
   \begin{equation*}
        \lim_{N\to \infty}\frac{E_N^\star }{N}=e(h^\star ).
   \end{equation*}
   
\smallskip
{\rm (c) (Ground states are piecewise periodic)} There exists a $K_0>0$ such that for a ground state $\underline{\sigma}\equiv\underline{h}$
   \begin{equation*}
       \sum_{h_\mu\ne h^\star }h_\mu\le K_0.
   \end{equation*}
   
\smallskip
{\rm (d) (Energy gap)} If $N$ is a multiple of $2h^\star $ then the unique ground state $\underline{\sigma}$ has  $\underline{h}^\star $ as the corresponding array, and for every other state
   \begin{equation}
       E_N(\underline{\sigma})-E^\star _N= E_N(\underline{\sigma})-N e(h^\star )>\Delta,
   \end{equation}
   for a $\Delta>0$ independent of $N$.
  
 \end{thm}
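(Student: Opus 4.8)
The plan is to take the lower bound (a) as the engine and to deduce (b), (c), (d) from it together with the minimality of $e$ from Lemma~\ref{lemma:eh}. The natural route for (a) is to pass to the domain-wall description: a state $\underline\sigma\equiv\underline h$ is encoded by the positions $y_1<\dots<y_{2M}$ of its sign changes on $\Lambda_N$ (an even number, by periodicity), with $h_\mu=y_{\mu+1}-y_\mu$. Then $\efer_N=-JN+4JM$ merely counts walls, while the antiferromagnetic part, written through $\sigma_i\sigma_j=(-1)^{w(i,j)}$ with $w(i,j)$ the number of walls separating $i$ and $j$, can be rewritten as a constant plus a sum of pairwise wall--wall interactions governed by a kernel obtained from $|i-j|^{-p}$ by discrete double summation. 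Since $x\mapsto|x|^{-p}$ is convex and decreasing for $p>1$, this effective kernel is itself convex; this is the structural fact I would extract from \cite{GLL} (equivalently one invokes reflection positivity / the chessboard estimate, which delivers the block bound directly).

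Convexity then lets me localize: grouping the interactions block by block and comparing each block with the infinite-volume periodic state of its own period gives $E_N(\underline\sigma)\ge\sum_\mu h_\mu e(h_\mu)-cN^{-p}$, the error being exactly the finite-size correction produced by the periodic images wrapping around $\Lambda_N$; tracking this correction yields the stated rate. Making the localization rigorous, with the bookkeeping of the wrap-around tails and the exact identification of the per-block contribution $h_\mu e(h_\mu)$, is the main obstacle.

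Given (a), part (b) is immediate: from $e(h_\mu)\ge e(h^\star)$ we get $E_N^\star\ge Ne(h^\star)-cN^{-p}$, while testing the state made of $\lfloor N/2h^\star\rfloor$ optimal pairs plus one remainder block gives $E_N^\star\le Ne(h^\star)+C$, so dividing by $N$ and letting $N\to\infty$ yields the limit $e(h^\star)$. For (c) I would first upgrade Lemma~\ref{lemma:eh} to a \emph{uniform} gap: since $h^\star$ is the unique minimizer and $e(h)$ converges as $h\to\infty$ to the per-site energy $e_\infty=-J+2\sum_{n\ge1}n^{-p}$ of the aligned state (the wall density, hence the energy gain over alignment, vanishing), with $e_\infty>e(h^\star)$ under the present hypotheses, there is $\delta>0$ with $e(h)-e(h^\star)\ge\delta$ for all $h\ne h^\star$. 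Then for a ground state, combining (a), the bound $E_N^\star\le Ne(h^\star)+C$ and $\sum_\mu h_\mu e(h^\star)=Ne(h^\star)$,
\[
\delta\!\!\sum_{h_\mu\ne h^\star}\!\! h_\mu\le\sum_\mu h_\mu\big(e(h_\mu)-e(h^\star)\big)\le E_N^\star-Ne(h^\star)+cN^{-p}\le C+c,
\]
whence $\sum_{h_\mu\ne h^\star}h_\mu\le(C+c)/\delta=:K_0$.

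Finally (d). When $2h^\star\mid N$ the remainder block disappears, so the periodic array $\underline h^\star$ has energy exactly $Ne(h^\star)$ by the scaling \eqref{5}, giving $E_N^\star\le Ne(h^\star)$. Two nontrivial points remain: that this is an equality with $\underline h^\star$ the unique minimizer, and that the gap is $N$-uniform. For exactness and uniqueness I would use the strictness of the convexity above (equivalently, the rigidity built into the reflection-positivity argument of \cite{GLL}), which forces equispaced walls and rules out any competitor strictly below $Ne(h^\star)$. For the gap, any $\underline\sigma\ne\underline h^\star$ has at least one block with $h_\mu\ne h^\star$, so (a) and the uniform gap give $E_N(\underline\sigma)-Ne(h^\star)\ge\delta-cN^{-p}$, which is $\ge\delta/2$ once $N\ge N_0$; the finitely many remaining multiples of $2h^\star$ are handled directly, the configuration set being finite for each and the minimal positive excess over $Ne(h^\star)$ giving a gap. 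Taking $\Delta$ to be the minimum of $\delta/2$ and these finitely many gaps concludes. The delicate step here is precisely the exact optimality and uniqueness, i.e.\ removing the $N^{-p}$ slack of (a) in the aligned case, for which the rigidity of the convexity (or of reflection positivity) is essential.
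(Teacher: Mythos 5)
Your overall architecture --- establish the block-wise lower bound (a) first, then deduce (b), (c), (d) from it together with the strict minimality of $h^\star$ --- is exactly the architecture of the paper (which follows \cite{GLL}), and your deductions of (b) and (c) are correct and essentially the paper's: the uniform gap $\delta=\min_{h\ne h^\star}\big(e(h)-e(h^\star)\big)>0$, combined with the test-state bound $E^\star_N\le Ne(h^\star)+C$, is precisely how one bounds $\sum_{h_\mu\ne h^\star}h_\mu$. The genuine gap is in the engine itself. Your route to (a) rests on the claim that, in the domain-wall representation, the antiferromagnetic energy becomes a sum of pairwise wall--wall interactions governed by a \emph{convex} kernel, so that a convexity/localization argument applies. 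But when the energy is rewritten block by block --- this is exactly the paper's formula \eqref{eq:eaf} --- the interaction between blocks $\mu$ and $\nu$ carries the alternating sign $(-1)^{\mu+\nu}$: pairs of walls of equal and of opposite type interact with opposite signs. Convexity of the double-summed kernel is true but by itself useless, since Hubbard/Pokrovsky--Uimin-type convexity arguments require all pair interactions to have a single sign; this sign frustration is precisely why \cite{GLL} (and the paper) run the argument through reflection positivity (chessboard estimates), supplemented by an a priori bound on block lengths of near-minimizers and a quantitative comparison between the energy of the equal-block array $(h,\dots,h)$ on a torus of the ``wrong'' size $N\ne Mh$ and the periodic energy $Mh\,e(h)$ --- which is where the $-cN^{-p}$ error term actually comes from. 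Your parenthetical ``equivalently one invokes reflection positivity'' conflates two inequivalent strategies, and on either reading the decisive step is left undone: you yourself flag the localization as ``the main obstacle''.

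The same flaw propagates into (d). Deriving the gap for large $N$ from (a) plus $\delta$, and treating the finitely many small multiples of $2h^\star$ separately, is fine \emph{provided} you already know that for those $N$ the array $\underline h^\star$ is the unique ground state with energy exactly $Ne(h^\star)$; but that is precisely the exactness/uniqueness statement you defer to ``strictness of the convexity above'', which does not exist here. In the paper's scheme these facts come out of the reflection-positivity machinery itself: the chessboard estimate bounds $E_N(\underline h)$ from below by the average over $\mu$ of the energies of the equal-block arrays $(h_\mu,\dots,h_\mu)$ on the same torus \emph{with no error term}; the comparison error vanishes identically when $Mh_\mu=N$, so equal-block states have energy exactly $Ne(h_\mu)\ge Ne(h^\star)$ by \eqref{5} and \eqref{enpersite}; and it is strictness in that estimate for non-equal-block arrays, not any convexity of the wall--wall kernel, that rules out all other candidates. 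So both the lower bound (a) and the rigidity needed in (d) require reflection positivity (or a substitute of comparable strength), and neither is supplied by the mechanism you propose.
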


 \section{Behaviour of renormalized energies}
In this section we consider the regimes when $e$ has only one minimizer
as in Theorem \ref{thm:asympt-1}.

We define the {\em renormalized functionals} 
\begin{equation}
F_N(\underline{\sigma})=E_N(\underline{\sigma})-Ne(h^\star )
\end{equation} for $\underline{\sigma}\in\Lambda_N$. 
By Theorem \ref{thm:asympt-1}(d) $F_N$ is non-negative, and strictly positive if $N$ is not a multiple of $2h^\star $.

A sequence $(\underline{\sigma}_N)_N$ with $\underline{\sigma}_N\in\Lambda_N$ is \emph{equibounded in energy} if $F_N(\underline{\sigma}_N)\le C$ for a constant $C$ independent of $N$.
Theorem \ref{thm:asympt-1}(c) guarantees that for every such sequence there exists $K_0$ such that 
\begin{equation}
\sum_{h^N_\mu\ne h^\star } h^N_\mu\le K_0,
\end{equation}
and  in particular that only a finite number of $h^N_\mu$ , uniformly bounded in $N$, can be different from $h^\star $.

If $N$ is a multiple of $2h^\star $, then $F_N$ is always nonnegative and is actually zero only when all blocks have length $h^\star $. We now show that if $N$ is not a multiple of $2h^\star $ then $F_N$ is strictly positive, uniformly in $N$.

\medskip
\begin{lemma}\label{lemma:strictlypositive} 
If $\underline{\sigma}$ is not equal to a state corresponding to $(h^\star ,\ldots,h^\star )$ then 
    \begin{equation*}
        F_N(\underline{\sigma})\ge \tilde\Delta
    \end{equation*}
   with $\tilde \Delta=\Delta/2h^\star $ and $\Delta$ provided in Theorem {\rm\ref{thm:asympt-1}(d)}. 
In particular, if $N$ is not a multiple of $2h^\star $ this lower bound is satisfied by all $\underline{\sigma}$.
\end{lemma}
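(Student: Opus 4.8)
The plan is to reduce the statement to the energy gap of Theorem~\ref{thm:asympt-1}(d), which is only available when the system size is a multiple of $2h^\star $, by exploiting the exact scaling relation \eqref{5} to pass from $\Lambda_N$ to a suitably enlarged lattice whose size \emph{is} a multiple of $2h^\star $. The point is that \eqref{5} converts the $N$-dependent object $F_N$ into a multiple of a quantity controlled by the single uniform gap $\Delta$.

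Concretely, I would fix $M=2h^\star $, so that $MN=2h^\star N$ is a multiple of $2h^\star $ for every $N$. Regarding the $N$-periodic extension of $\underline{\sigma}$ as an element of $\{\pm 1\}^{\Lambda_{MN}}$, relation \eqref{5} gives $E_{MN}(\underline{\sigma})=M\,E_N(\underline{\sigma})$, and subtracting $MN\,e(h^\star )=M\cdot N e(h^\star )$ from both sides yields the clean identity
\[
F_{MN}(\underline{\sigma})=M\,F_N(\underline{\sigma})=2h^\star \,F_N(\underline{\sigma}).
\]

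Next I would check that the extension of $\underline{\sigma}$ is \emph{not} the ground state of $\Lambda_{MN}$, so that part~(d) applies to it. By (d) the unique ground state on $\Lambda_{MN}$ is the state with array $(h^\star ,\ldots,h^\star )$, which is $2h^\star $-periodic; since the extension is $N$-periodic, if it coincided (up to translation) with this perfect state we would need $2h^\star \mid N$, and then the restriction of $\underline{\sigma}$ to a single period of length $N$ would itself have array $(h^\star ,\ldots,h^\star )$, contrary to hypothesis. Hence the extension is a state on $\Lambda_{MN}$ distinct from its ground state, and Theorem~\ref{thm:asympt-1}(d) gives $F_{MN}(\underline{\sigma})>\Delta$. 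Combining this with the identity above,
\[
F_N(\underline{\sigma})=\frac{1}{2h^\star }\,F_{MN}(\underline{\sigma})>\frac{\Delta}{2h^\star }=\tilde\Delta,
\]
which is the claim. For the final assertion I would use that an array $(h^\star ,\ldots,h^\star )$ summing to $N$ must have an even number of blocks (as noted in the excerpt, $M>1$ forces $M$ even), so its total length is a multiple of $2h^\star $; therefore when $2h^\star \nmid N$ no state on $\Lambda_N$ equals $(h^\star ,\ldots,h^\star )$, and the bound holds for \emph{every} $\underline{\sigma}$.

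The only point requiring genuine care — and what I would regard as the main obstacle — is the verification that the periodic extension is really a non-minimal state on the enlarged lattice, i.e.\ that being non-perfect on $\Lambda_N$ is preserved under periodic extension to $\Lambda_{MN}$; this is where the two cases $2h^\star \mid N$ and $2h^\star \nmid N$ must be separated as above, using the uniqueness of the ground state asserted in (d). Everything else is bookkeeping around the exact scaling identity \eqref{5}, which does the real work.
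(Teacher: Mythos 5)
Your proof is correct and follows essentially the same route as the paper's: extend $\underline{\sigma}$ periodically to $\Lambda_{2h^\star N}$, use the scaling identity \eqref{5} to obtain $2h^\star F_N(\underline{\sigma})=F_{2h^\star N}(\underline{\sigma})$, and invoke the energy gap of Theorem~\ref{thm:asympt-1}(d) on the enlarged lattice, whose size is a multiple of $2h^\star$. The only difference is that you verify explicitly that the periodic extension of a non-perfect state cannot be the ground state of $\Lambda_{2h^\star N}$ (and justify the final assertion via the parity of the number of blocks), two points the paper leaves implicit.
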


\begin{proof}
Let $\underline{\sigma}\in \{\pm 1\}^{\Lambda_N}$, which we extend by periodicity so as to regard it as an element of $\{\pm 1\}^{\Lambda_{2h^\star N}}$. Then we have
\begin{equation}
    \begin{split}
        2h^\star  F_N(\underline{\sigma}) &= 2h^\star  (E_N(\underline{\sigma})-Ne(h^\star ))\\
&=E_{ 2h^\star  N}(\underline{\sigma})-2h^\star  Ne(h^\star )
\ge
\Delta,    \end{split}
\end{equation}
where we have used \eqref{5} in the second inequality and Theorem \ref{thm:asympt-1}(d) with $2h^\star  N$ in the place of $N$.
\end{proof}

In order to describe the behaviour of sequences of states with equibounded energy we identify $\Lambda_N$ with $(0,1]\cap \frac1N\mbb Z$ by scaling and define a convergence of spin states to piecewise-continuous $1$-periodic functions taking values in a set parameterized by the ground states themselves; that is, $\mbb Z/2h^\star \mbb Z$, or $\{1,\ldots, 2h^\star \}$.

\begin{defn}[convergence of spin states to piecewise-continuous functions]
Let $\underline{\sigma}\in \{\pm1\}^{\Lambda_N}$. In this section we first parameterize $\Lambda_N$ as $\{1,\ldots, N\}$. The state $\underline{\sigma}$ is then determined by a translation $\tau\in\Lambda_N$ and an array $(\underline{D}_1, \underline{G}_1,  \ldots , \underline{D}_S, \underline{G}_S)$,  where $\underline{G}_i$ is
a maximal sequences of consecutive pairs of blocks of length $h^\star $ starting with the plus sign, the arrays $\underline{D}_i$, which we call \emph{defects}, are arbitrary. For each $\underline{G}_i$ we let $R_i \in \Lambda_N$ denote its first element. After this decomposition to such $\underline{\sigma}$ we can associate a 
function $\widehat r\colon: (0,N]\to \mbb Z/2h^\star \mbb Z$ in the following way.  If $x\in (0,N]$ and $\lfloor x\rfloor-\tau\in\underline{G}_i$ then $\widehat r(x)$ is the element in $\mbb Z/2h^\star \mbb Z$ corresponding to $R_i$ $\operatorname{mod }2h^\star $. Finally, the function $r\colon \mbb T=\mbb R/\mbb Z\to \mbb Z/2h^\star \mbb Z$ 
is defined by $r(x)= \widehat r(Nx)$ in $(0,1]$, identified as $\mbb R/\mbb Z$.
Note that $r$ is a piecewise-constant function with at most $S$ discontinuity points in $(0,1]$. 
   
Let $\underline{\sigma}^N\in \{\pm1\}^{\Lambda_N}$ be a sequence with equibounded number of defects in the terminology above, and let $r^N\colon\mbb T\to \mbb Z/2h^\star \mbb Z$ be the corresponding functions. Then we say that $\underline{\sigma}^N$ {\em converge to} $r\colon \mbb T=\mbb R/\mbb Z\to \mbb Z/2h^\star \mbb Z$ if there are a finite number of points $x_1,\ldots, x_S\in (0,1]$ such that $r^N(x)= r(x)$ definitely on any compact interval of $(x_{k-1}, x_k)$ and we have set $x_0=x_S-1$. Note that this convergence is compact by the Bolzano--Weierstrass Theorem and it can be seen as an $L^p(0,1)$-convergence for any finite $p>1$ if we identify $\mbb Z/2h^\star \mbb Z$ with $\{1,\ldots,2h^\star \}$. Note moreover that $x_k$ may not be a discontinuity point for $r$ because we may have $R_k=R_{k-1}$ in the notation above.
\end{defn}


We now prove the key technical lemma stating that, in a configuration which is almost periodic in the sense that it is composed of all arrays of a common length $h$ except for a finite number of defects, the interaction between distant defects is negligible.  The lemma holds for any period $h$, but we will apply it only in the case of period $h^\star $.

\begin{lemma}[Decoupling of defects]\label{lemma} Let $D_0$ be fixed. For any $N$ let $\underline{\sigma}\in\{\pm1\}^{\Lambda_N}$ be a state with $S$ defects alternated with $h$-periodic parts, say 
\[\underline{\sigma}\sim (\underline{D}_1, \underline{P}_1,\ldots, \underline{D}_S,\underline{P}_S),\] 
where $\underline{P}_i=(\smash{\overbrace{h,\ldots, h}^{M_i \text{ times}}})$, with  $M_i$ even, we also suppose with no loss of generality that all periodic parts begin with the same sign. 

We define $M=\sum_i M_i$, $D=\sum_i D_i$, where $D_i$ is the total length of the defect $\underline{D}_i$. Thus, for the total number of sites $N$ we have $N=Mh+D$. We suppose that $D\le D_0$; apart from this restriction the $\underline{D}_i$ are arbitrary. 

We consider a state $\underline{\sigma}'$ obtained by removing a defect and substituting it by a $h$-periodic part:
\[\underline{\sigma}'\sim(\underline{D}_1, \underline{P}_1,\ldots, \underline{D}_{S-1},\underline{P}_{S-1}') ,\]
with $\underline{P}_{S-1}'$ containing $M'_{S-1}=M_{S-1}+M_S+2\lfloor D_S/2h\rfloor$ $h$-blocks. Note that the total number of sites in $\underline{\sigma}'$ is $N'=(M+2\lfloor D_S/2h\rfloor)h+ D-D_S $.

Then there exists a constant $C$ depending only on $D_0$ and not on $N$, such that the energy difference between $\underline{\sigma},\underline{\sigma}'$ can be estimated as
\begin{equation}\label{eq:decoupling}
\big|E_N(\underline{\sigma} )-E_{N'}(\underline{\sigma}')-\big(E_{N_S}( \underline{\delta}_S)-E_{M_S''h}(h,\ldots, h)\big)\big|\le C\min_i\{M_i\}^{1-p},
\end{equation}
where
\[\underline{\delta}_S\sim (\underline{D}_S, \underline{P}'_S)\]
is a state where only the defect $\underline{D}_S$ is present and the period part $\underline{P}_S'$ contains $M'_S=M+ 2\lfloor (D-D_S)/2h\rfloor$ $h$-blocks; thus $\underline{\delta}_S$ has $N_S=M'_Sh+D_S$ sites in total. The last energy is the energy of a completely periodic state with $M_S''=M+2\lfloor D_S/2h\rfloor+2\lfloor (D-D_S)/2h\rfloor$ $h$-periodic blocks; that by definition  is also equal to $M_S''he(h)$.
\end{lemma}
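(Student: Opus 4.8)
The plan is to view the four energies in \eqref{eq:decoupling} as the corners of a $2\times2$ table indexed by the presence or absence of the defect $\underline D_S$ and by the presence or absence of the remaining defects $\underline D_1,\dots,\underline D_{S-1}$: indeed $\underline\sigma$ carries both, $\underline\sigma'$ carries only the others, $\underline\delta_S$ carries only $\underline D_S$, and $(h,\dots,h)$ carries neither. The combination on the left-hand side of \eqref{eq:decoupling} is then the mixed second difference across this table. Expanding each energy as $\efer+\eaf$ and each long-range part as a sum of pair interactions, any contribution that is a function of only one of the two groups --- the self-energy of $\underline D_S$, the self-energy of the other defects, and the interaction of either group with the periodic background --- is annihilated by the second difference, up to the shift of the periodic background produced by changing the number of defect sites (estimated below), and only the joint contribution survives, namely the interaction between $\underline D_S$ and $\underline D_1,\dots,\underline D_{S-1}$. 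The bookkeeping identity $N+M_S''h=N'+N_S$, which follows from the definitions of $N'$, $N_S$ and $M_S''$, records that the two pairs of configurations carry the same number of sites and of $h$-blocks and is what makes this cancellation meaningful.

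First I would dispose of the ferromagnetic part. Since $\efer_N(\underline\sigma)=-JN+2J\,b(\underline\sigma)$ depends only on the number $b$ of sign changes, and inserting or removing $\underline D_S$ modifies $b$ only inside $\underline D_S$ and the two blocks flanking it --- in a way independent of the distant defects --- the ferromagnetic contribution to the second difference vanishes identically, with no error.

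For the antiferromagnetic part I would set $m=h\min_i\{M_i\}$ and fix a site bijection that places $\underline D_S$ (or its periodic replacement) at the same position in all four configurations and matches the surrounding $h$-blocks. Because every periodic part is the same $h$-pattern starting with the same sign, the configurations being compared agree on every site within the two periodic parts adjacent to $\underline D_S$, hence up to distance at least $m$; consequently all direct ($n=0$) pair interactions with both endpoints in this near region cancel in the second difference. The terms that survive have one endpoint $i$ in (or next to) $\underline D_S$ and the other endpoint $j$ at distance $|i-j|\ge m$ where two of the configurations disagree, and since $\sigma_i$ is unchanged there the contribution is $\sigma_i\,(\sigma_j-\sigma_j')/|i-j|^p$, bounded in absolute value by $2/|i-j|^p$. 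Using $\#\underline D_S\le D_0$ and $\sum_{k\ge m}k^{-p}\le (p-1)^{-1}m^{1-p}$ gives
\[
\sum_{i\in\underline D_S}\ \sum_{|i-j|\ge m}\frac{2}{|i-j|^p}\ \le\ C\,m^{1-p}\ =\ C'\,\min_i\{M_i\}^{1-p}.
\]
The periodic-image terms $\sum_{n\ne0}|i-j+nN|^{-p}$ differ among the four configurations because their periods $N,N',N_S,M_S''h$ differ, but each such period is at least $m$, so these terms are $O(m^{1-p})$ and are absorbed into the same bound; the residual mismatch produced by the bounded ($\le 2h$) shifts of the periodic background is estimated in the same crude way and is again $O(m^{1-p})$.

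The estimates above are routine; the main obstacle is the bookkeeping that turns the second difference into an \emph{exact} cancellation of the near-field terms. One must verify that the floor-function choices defining $M'_{S-1}$, $M'_S$ and $M_S''$, together with the evenness of the $M_i$ and the convention that all periodic parts begin with the same sign, make the numbers of $h$-blocks on each side of $\underline D_S$ agree across the four configurations up to the first discrepancy, so that the site bijection is genuinely exact within distance $m$ and the leftover shift never exceeds $2h$. Once this alignment is in place, the far-field and image estimates close the proof with a constant $C$ depending only on $D_0$, $h$ and $p$.
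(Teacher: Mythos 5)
Your $2\times 2$ second-difference framing is a fair reformulation of what the lemma asserts, and your treatment of the ferromagnetic part is correct (it is the same observation the paper makes). The genuine gap is in the antiferromagnetic estimate, specifically in the claim that after the cancellations ``the terms that survive have one endpoint $i$ in (or next to) $\underline D_S$''. This is false, and the terms you dismiss at the end as ``residual mismatch \dots estimated in the same crude way'' are in fact the dominant ones. The reason is that the floor corrections are generically nonzero: $N-N'=N_S-M_S''h=D_S-2h\lfloor D_S/2h\rfloor\in[0,2h)$, so the four configurations live on tori of different lengths. Consequently, even pairs $(i,j)$ whose spins and mutual distance agree in the two compared configurations contribute to the second difference through their wrap-around (periodic image) interactions $\sum_{n\ne 0}|i-j+nN|^{-p}$, and \emph{both} endpoints of such pairs range over $O(N)$ sites, not over a set of size $O(D_0)$. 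Concretely, take $S\ge 2$, $D_S$ not a multiple of $2h$, let $\min_i M_i=K\to\infty$ be attained away from $\underline P_1$, and let $M_1$ be enormously larger than $K$ (so $N\approx M_1h$). For $i,j\in\underline P_1$ the spins and the distance $d=d(i,j)$ coincide in $\underline\sigma$ and $\underline\sigma'$, but the image sums differ by roughly $p\,\delta\,(N-d)^{-p-1}$ with $\delta=D_S-2h\lfloor D_S/2h\rfloor$. Your crude absolute-value bound then gives, for each fixed $i$, about $\delta\sum_{s\ge 2Kh}s^{-p-1}\approx \delta\,(Kh)^{-p}$, and after summing over the $\approx N$ choices of $i$ a total of order $\delta\,N\,(Kh)^{-p}$, which exceeds the target $K^{1-p}$ by the unbounded factor $\sim N/K$. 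So the proposed estimates cannot close the proof, no matter how carefully the site bijection is arranged; the alignment bookkeeping you flag as ``the main obstacle'' is not where the difficulty lies.

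What rescues these terms is the \emph{alternation in sign} of $\sigma_i\sigma_j$ across consecutive blocks of a periodic part, which your proposal never uses (all your estimates only invoke $|\sigma_i|\le 1$). Summed with signs, the $\sim M_1^2$ block pairs inside $\underline P_1$ collapse by geometric summation to bounded factors: this is precisely what the paper's integral representation accomplishes, where the sums over blocks produce the factors $(1-\mr e^{-\alpha M_ih})/(1+\mr e^{-\alpha h})\le 1$ in \eqref{eq:same_periodic_part}, in place of factors of order $M_i$ that an unsigned estimate would give. With that cancellation the paper bounds the same family of terms by $M_ih^2(N')^{-p}\le hN^{1-p}\le h\,(h\min_iM_i)^{1-p}$ in \eqref{eq:same_periodic_part2}--\eqref{eq:int_2}, which is admissible. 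To repair your argument you would need to replace the crude bound on all pairs lying in periodic parts by a summation-by-parts (Abel) argument exploiting the $2h$-periodic sign pattern, or equivalently redo the paper's geometric sums; that ingredient is the heart of the lemma and is missing from the proposal.
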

\begin{proof}
We note that the ferromagnetic parts of the left-hand side and of the right-hand side of \eqref{eq:decoupling} concide, so that we only have to check the equality for the antiferroagnetic parts.

We first focus on the antiferromagnetic part of the left-hand side. If  $\mu, \nu$ are any two distinct blocks of the state $\underline{\sigma}$, we denote by $d_{\mu,\nu}=h_{\mu+1}+\cdots+ h_{\nu-1}$, $d_{\nu,\mu}=2N-d_{\mu,\nu}-h_\mu-h_\nu=h_{\nu+1}+\cdots +h_{\mu-1}$ their distances along the two orientations of the torus. We denote by $\eaf_{\mu,\nu;N}(\underline\sigma)$ the anti-ferromagnetic energy among the two blocks of indices $\mu,\nu$. 

Along the same lines of Giuliani, Lebowitz and Lieb \cite{GLL} it is possible to obtain a closed form expression for $\eaf_{\mu,\nu;N}(\underline\sigma)$ by using the equality
\begin{equation}
    \frac{{\Gamma(p)} }{x^p}=\int_0^{+\infty}\alpha^{p-1}\mr e^{-\alpha x}\mr d\alpha,
\end{equation}
where $\Gamma$ is the Euler Gamma function, for each antiferromagnetic interaction term and using multiple geometric sums:
\begin{equation}\label{eq:eaf}
    \begin{split}
        \eaf_{\mu,\nu;N}(\underline\sigma) &\coloneqq (-1)^{\mu+\nu} \sum_{\substack{ 1\le i\le h_\mu\\ 1\le j\le h_\nu  }}\sum_{n\in \mbb Z} \frac{1}{|j+i+d_{\mu,\nu}-1+nN|^p }\\
        &= (-1)^{\mu+\nu} \int_0^{+\infty}\alpha^{p-1} \sum_{\substack{ 1\le i\le h_\mu\\ 1\le j\le h_\nu  }} \sum_{n\ge 0}\big(\mr e^{-\alpha(j+i+d_{\mu,\nu}-1+nN)}+\mr e^{-\alpha(j+i+d_{\nu,\mu}-1+nN)}\big)\frac{\mr d \alpha}{\Gamma(p)}\\
               &= (-1)^{\mu+\nu} \int_0^{+\infty}\alpha^{p-1} \bigg( \sum_{\substack{ 1\le i\le h_\mu\\ 1\le j\le h_\nu  }}\mr e^{-\alpha(j+i)}\bigg)\frac{\mr e^{-\alpha(d_{\mu,\nu}-1)}+\mr e^{-\alpha(d_{\nu,\mu}-1)}}{1-\mr e^{-\alpha N}}\frac{\mr d \alpha}{\Gamma(p)}\\
        & =(-1)^{\mu+\nu} \int_{0}^{+\infty}{ \frac{\mr e^{-\alpha}\alpha^{p-1}}{(1-\mr e^{-\alpha})^2}}\frac{(1-\mr e^{-\alpha h_\nu})(1-\mr e^{-\alpha h_\mu})\big(\mr e^{-\alpha d_{\mu,\nu}}+\mr e^{-\alpha d_{\nu,\mu}}\big)}{1-\mr e^{-\alpha N}}\frac{\mr d \alpha}{\Gamma(p)}.
    \end{split}
\end{equation}
Analogous expressions hold for $\underline{\sigma}'$ with $N'$ in place of $N$ and the correct block lengths and inter-block distances (which we will denote with a prime), as well as for $\underline{\delta}_S$ and the completely periodic state. Thus, the expressions for $\eaf_{\mu,\nu;N}(\underline\sigma)$ and $\eaf_{\mu,\nu;N'}(\underline\sigma ')$ is the same except for
\[\frac{ \mr e^{-\alpha d_{\mu,\nu}}+\mr e^{-\alpha d_{\nu,\mu}} }{1-\mr e^{-\alpha N}}\]
which becomes
\begin{equation*}
   \frac{ \mr e^{-\alpha d_{\mu,\nu}}+\mr e^{-\alpha d_{\nu,\mu}'} }{1-\mr e^{-\alpha N'}},
   \end{equation*}
since one of the block lengths $d_{\mu,\nu},d_{\nu,\mu}$ must stay the same both in $\underline{\sigma}$ and in $\underline{\sigma}'$, without loss of generality we suppose $d_{\mu,\nu}=d_{\mu,\nu}'$. We also remark that $\underline{\sigma}$ and $\underline{\sigma}'$ have the same block length except possibly for those blocks in $\underline{D}_S$ and the corresponding $h$-blocks that subistitute them in $\underline{\sigma}'$. Similarly, $\underline{\sigma}$ and $\underline{\delta}_S$ have the same block lengths except for the blocks of $\underline{D}_1,\ldots,\underline{D}_{S-1}$ and the $h$-blocks that substitute them in $\underline{\delta}_S$.

This last remark is helpful in view of the fact that equation (\ref{eq:decoupling}) can be written as
\begin{equation}
      E_N(\underline{\sigma} )-E_{N_S}( \underline{\delta}_S)=E_{N'}(\underline{\sigma}')-E_{M_S''h}(h,\ldots, h)+\mr O({\min\{M_S, M_{S-1},M-M_{S-1}-M_S\}^{1-p}}),
\end{equation}
and indeed we will compare some terms in $ E_N(\underline{\sigma} )$ with the corresponding terms in $E_{N'}(\underline{\sigma}')$, and some others with the corresponding terms in $E_{N_S}( \underline{\delta}_S)$.

In any case, we can sum geometrically over blocks in a periodic part $\underline{P}_i$: for a fixed block $\nu$ outside $\underline{P}_i$
\begin{equation}\label{eq:energy_single_block_with_periodic}
    \begin{split}
      &\sum_{\mu\in \underline{P}_i}(-1)^{\mu+\nu} \int_{0}^{+\infty}{ \frac{\mr e^{-\alpha}\alpha^{p-1}}{(1-\mr e^{-\alpha})^2}}\frac{(1-\mr e^{-\alpha h_\nu})(1-\mr e^{-\alpha h})}{1-\mr e^{-\alpha N}} \mr e^{-\alpha d_{\mu,\nu}}\frac{\mr d \alpha}{\Gamma(p)}\\
           & =\sum_{i=0}^{M_i-1}(-1)^{i+\nu} \int_{0}^{+\infty}{\frac{\mr e^{-\alpha}\alpha^{p-1}}{(1-\mr e^{-\alpha})^2}}\frac{(1-\mr e^{-\alpha h_\nu})(1-\mr e^{-\alpha h})}{1-\mr e^{-\alpha N}} \mr e^{-\alpha (d(i)_\nu+ih)}\frac{\mr d \alpha}{\Gamma(p)}\\
            &=(-1)^\nu\int_{0}^{+\infty}{ \frac{\mr e^{-\alpha}\alpha^{p-1}}{(1-\mr e^{-\alpha})^2}}\frac{(1-\mr e^{-\alpha h_\nu})(1-\mr e^{-\alpha h})}{1-\mr e^{-\alpha N}}\frac{1-\mr e^{-\alpha M_i h}}{1+\mr e^{-\alpha h}}\mr e^{-\alpha d(i)_\nu}\frac{\mr d \alpha}{\Gamma(p)},
    \end{split}
\end{equation}
where $d(i)_\nu=\min_{\mu\in \underline{P}_i}\{d_{\mu,\nu}\}$;  analogous expressions hold {\em mutatis mutandis} for the other three states. Then, we can sum over blocks belonging to some other $\underline{P}_j$, with $j\ne i$:
\begin{equation}\label{eq:PiPj}
    \begin{split}
         \sum_{\mu\in \underline{P}_i, \nu \in \underline{P}_j}E^{\operatorname{af}}_{\mu,\nu;N}(\underline{\sigma}) &= \int_{0}^{+\infty}{ \frac{\mr e^{-\alpha}\alpha^{p-1}}{(1-\mr e^{-\alpha})^2}}\frac{ (1-\mr e^{-\alpha h})^2(\mr e^{-\alpha d{(i,j)}}+\mr e^{-\alpha d(j,i)})}{1-\mr e^{-\alpha N}}\frac{1-\mr e^{-\alpha M_i h}}{1+\mr e^{-\alpha h}}\frac{1-\mr e^{-\alpha M_j h}}{1+\mr e^{-\alpha h}}\frac{\mr d \alpha}{\Gamma(p)},
    \end{split}
\end{equation}
where $d{(i,j)}=\min_{\mu\in \underline{P}_i, \nu\in \underline{P}_j}\{d_{\mu,\nu}\}$. A corresponding expression holds for $\underline{\sigma}'$.

In order to estimate the differences  $\eaf_{\mu,\nu}(\underline\sigma)-\eaf_{\mu,\nu}(\underline\sigma')$ we note that, since $N>N'$, we have
\begin{equation}\label{eq:factor}
\begin{split}
\bigg|\frac{1}{1-\mr e^{-\alpha N}}-\frac{1}{1-\mr e^{-\alpha N'}}\bigg|
&=\bigg|\frac{\mr e^{-\alpha N}-\mr e^{-\alpha N'}}{(1-\mr e^{-\alpha N})(1-\mr e^{-\alpha N'})}\bigg|\\
&=\mr e^{-\alpha N'}\frac{1-\mr e^{-\alpha (N-N')}}{(1-\mr e^{-\alpha N})(1-\mr e^{-\alpha N'})}\\
   &\le\frac{\mr e^{-\alpha N'} }{ 1-\mr e^{-\alpha N'}},\\
\end{split}
\end{equation}
with analogous estimates holding for the other couples of states.
Then, for $i\ne j$, if $i$ and $j$ are not exactly $S-1, S$ and $S\ge 3$, we estimate
\begin{equation}
    \begin{split}
        &\bigg| \sum_{\mu\in \underline{P}_i, \nu \in \underline{P}_j} E^{\operatorname{af}}_{\mu,\nu; N}(\underline{\sigma})- E^{\operatorname{af}}_{\mu,\nu;N'}(\underline{\sigma}')\bigg|\\
        &   \le\bigg| \int_{0}^{+\infty} \frac{\mr e^{-\alpha}\alpha^{p-1}}{(1-\mr e^{-\alpha})^2}\frac{ (1-\mr e^{-\alpha h})^2(\mr e^{-\alpha (N'+d(i,j))}+\mr e^{-\alpha (N'+d(j,i))}+\mr e^{-\alpha d(j,i)}-\mr e^{-\alpha d'(j,i)})}{1-\mr e^{-\alpha N'}}\\
        &\mkern550mu\cdot\frac{1-\mr e^{-\alpha M_i h}}{1+\mr e^{-\alpha h}}\frac{1-\mr e^{-\alpha M_j h}}{1+\mr e^{-\alpha h}}\frac{\mr d \alpha}{\Gamma(p)}\bigg|\\
        &\le h^2\int_{0}^{+\infty}{\mr e^{-\alpha}\alpha^{p-1}}  {  (\mr e^{-\alpha (N'+d(i,j))}+\mr e^{-\alpha (N'+d(j,i))}+\mr e^{-\alpha d(j,i)}+\mr e^{-\alpha d'(j,i)}) \frac{\mr d \alpha}{\Gamma(p)}},
    \end{split}
\end{equation}
and conclude noting that in this case $ d(j,i), d'(j,i)\ge \min\{M_S,M_{S-1}\}h$. Analogously one estimates the corresponding differences in the contributions of $\underline{\delta}_S$ and the totally periodic state.

If $i,j$ are $S-1,S$ we compare instead
\begin{equation}
    \begin{split}
        &\bigg| \sum_{\mu\in \underline{P}_{S-1}, \nu \in \underline{P}_S} E^{\operatorname{af}}_{\mu,\nu; N}(\underline{\sigma})- E^{\operatorname{af}}_{\mu,\nu;N_S}(\underline{\delta}_S)\bigg|\\
        &   \le\bigg| \int_{0}^{+\infty}\frac{\mr e^{-\alpha}\alpha^{p-1}}{(1-\mr e^{-\alpha})^2}\frac{ (1-\mr e^{-\alpha h})^2(\mr e^{-\alpha (N_S+d(S-1,S))}+\mr e^{-\alpha (N_S+d(S,S-1))}+\mr e^{-\alpha d(S,S-1)}-\mr e^{-\alpha d_S(S,S-1)})}{1-\mr e^{-\alpha N_S}}\\
        &\hskip-1.9cm\mkern640mu\cdot\frac{1-\mr e^{-\alpha M_i h}}{1+\mr e^{-\alpha h}}\frac{1-\mr e^{-\alpha M_j h}}{1+\mr e^{-\alpha h}}\frac{\mr d \alpha}{\Gamma(p)}\bigg|\\
        &\le h^2\int_{0}^{+\infty} {\mr e^{-\alpha}\alpha^{p-1}}  {  (\mr e^{-\alpha (N'+d(S-1,S))}+\mr e^{-\alpha (N_S+d(S,S-1))}+\mr e^{-\alpha d(S,S-1)}+\mr e^{-\alpha d_S(S,S-1)})\frac{\mr d \alpha}{\Gamma(p)}},
    \end{split}
\end{equation}
since for $\underline{\sigma}$ and $\underline{\delta}_S$ is the distance $d(S-1,S)$ which is the same for the two of them, and also in this case we conclude. If $S=2$ the reasoning is very similar, the only difference being that one must split $ \sum_{\mu\in \underline{P}_{S-1}, \nu \in \underline{P}_S, \mu\ne\nu} $ into contributions with $d(2,1)$ and those with $d(1,2)$, the former must be compared with corresponding contributions in $\underline{\sigma}'$ (which in this case could be called $\underline{\delta}_1$), the latter with $\underline{\delta}_2$.

In the end we are left with contribution relative a $\mu,\nu$ belonging to the same periodic part: as for contributions where one block belongs to a defect or to an $h$-block substituting it we estimate them by \eqref{eq:energy_single_block_with_periodic} and by comparison between couple of states where the block in question is the same.

For $\mu,\nu\in \underline{P}_i, \nu\ne \mu$ we obtain a formula a little different than before:
\begin{equation}\label{eq:same_periodic_part}
    \begin{split}
         &\sum_{\mu,\nu\in \underline{P}_i,  \mu\ne\nu} E^{\operatorname{af}}_{\mu,\nu;N}(\underline{\sigma}) \\
         &
         = \sum_{i=0}^{M_i-1}\sum_{j=i+1}^{M_i-1}(-1)^{i+j}\int_{0}^{+\infty}{
         \frac{\mr e^{-\alpha}\alpha^{p-1}}{(1-\mr e^{-\alpha})^2}}\frac{ (1-\mr e^{-\alpha h})^2\mr e^{-\alpha (j-1)h}}{1-\mr e^{-\alpha N}}(1+\mr e^{-\alpha(N-M_ih))})\frac{\mr d \alpha}{\Gamma(p)}\\
          &
         = -\sum_{i=0}^{M_i-1}\int_{0}^{+\infty}{\frac{\mr e^{-\alpha}\alpha^{p-1}}{(1-\mr e^{-\alpha})^2}}\frac{ (1-\mr e^{-\alpha h})^2}{1-\mr e^{-\alpha N}}\,\mr e^{-\alpha ih}\frac{1-(-1)^{M_i-i}\mr e^{-\alpha(M_i-i)h}}{1+\mr e^{-\alpha h}}(1+\mr e^{-\alpha(N-M_ih))})\frac{\mr d \alpha}{\Gamma(p)}\\
         & =- \sum_{i=0}^{M_i-1}\int_{0}^{+\infty}{ \frac{\mr e^{-\alpha}\alpha^{p-1}}{(1-\mr e^{-\alpha})^2}}\frac{ (1-\mr e^{-\alpha h})^2}{1-\mr e^{-\alpha N}}\frac{\mr e^{-\alpha ih}}{1+\mr e^{-\alpha h}}(1+\mr e^{-\alpha(N-M_ih))})\frac{\mr d \alpha}{\Gamma(p)}\\
                  & =- \int_{0}^{+\infty}{ \frac{\mr e^{-\alpha}\alpha^{p-1}}{(1-\mr e^{-\alpha})^2}}\frac{ (1-\mr e^{-\alpha h})^2}{1-\mr e^{-\alpha N}}\frac{1-\mr e^{-\alpha M_ih}}{(1+\mr e^{-\alpha h})(1-\mr e^{-\alpha h})}(1+\mr e^{-\alpha(N-M_ih))})\frac{\mr d \alpha}{\Gamma(p)}.
    \end{split}
\end{equation}
We can then estimate
\begin{equation}\label{eq:same_periodic_part2}
    \begin{split}
         &\bigg|\sum_{\mu,\nu\in \underline{P}_i,  \mu\ne\nu} \eaf_{\mu,\nu;N}(\underline{\sigma})-\eaf_{\mu,\nu;N'}(\underline{\sigma}')\bigg| \\
                  & \le M_ih^2 \int_{0}^{+\infty} {\mr e^{-\alpha}\alpha^{p-1} }\bigg|\frac{1+\mr e^{-\alpha(N-M_ih)}}{1-\mr e^{-\alpha N}}-\frac{1+\mr e^{-\alpha(N'-M_ih)}}{1-\mr e^{-\alpha N'}}\bigg|\frac{\mr d \alpha}{\Gamma(p)},
    \end{split}
\end{equation}
and conclude by observing that
\begin{equation}\label{eq:int_1}
    \begin{split}
                  & \int_{0}^{+\infty} {\mr e^{-\alpha}\alpha^{p-1} }\bigg|\frac{\mr e^{-\alpha(N-M_i h)}}{1-\mr e^{-\alpha N}}-\frac{\mr e^{-\alpha(N'-M_ih)}}{1-\mr e^{-\alpha N'}}\bigg|\frac{\mr d \alpha}{\Gamma(p)}\\
  & =\int_{0}^{+\infty} {\mr e^{-\alpha}\alpha^{p-1} }\bigg(\frac{\mr e^{-\alpha(N'-M_ih)}}{1-\mr e^{-\alpha N'}}-\frac{\mr e^{-\alpha(N-M_i h)}}{1-\mr e^{-\alpha N}}\bigg)\frac{\mr d \alpha}{\Gamma(p)}\\
  &\le   \int_{0}^{+\infty} {\mr e^{-\alpha}\alpha^{p-1} }\mr e^{-\alpha(N'-M_i h)}\bigg(\frac{1}{1-\mr e^{-\alpha N'}}-\frac{1}{1-\mr e^{-\alpha N}}\bigg)\frac{\mr d \alpha}{\Gamma(p)}\\
  &\le   \int_{0}^{+\infty} {\mr e^{-\alpha}\alpha^{p-1} } \bigg(\frac{1}{1-\mr e^{-\alpha N'}}-\frac{1}{1-\mr e^{-\alpha N}}\bigg)\frac{\mr d \alpha}{\Gamma(p)}\\
    \end{split}
\end{equation}
and that
\begin{equation}\label{eq:int_2}
    \begin{split}
                  & \int_{0}^{+\infty}
                  {\mr e^{-\alpha}\alpha^{p-1} }\mr e^{-\alpha(N-N')}\Big|\frac{1}{1-\mr e^{-\alpha N}}-\frac{1}{1-\mr e^{-\alpha N'}}\Big|\frac{\mr d \alpha}{\Gamma(p)}\\
                  &\le \int_{0}^{+\infty} {\mr e^{-\alpha}\alpha^{p-1} }\frac{\mr e^{-\alpha N'}}{1-\mr e^{-\alpha N'}}\frac{\mr d \alpha}{\Gamma(p)}\\
                  &=(N')^{-p}\frac1{\Gamma(p)}\int_{0}^{+\infty}
                  {\mr e^{-t/N'}t^{p-1} }\frac{\mr e^{-t}}{1-\mr e^{-t}}{\mr d t},
    \end{split}
\end{equation}
since both integrals in $t$ tend to convergent integrals: in (\ref{eq:int_1}) to $\displaystyle\int_0^{+\infty}\frac{e^{-t}}{1-\mr e^{-t}}\mr  t^{p-1}d t$  while  in (\ref{eq:int_2}) to $\displaystyle\int_{0}^{+\infty}\frac{\mr e^{-t}}{1-\mr e^{-t}}
                 t^{p-1}  {\mr d t}$.

\smallskip
We are now left with the very last case: $\mu=\nu$.
\begin{equation}\label{eq:same_block}
    \begin{split}
        E^{\operatorname{af}}_{\mu,\mu;N}(\underline{\sigma})&=\overbrace{\sum_{1\le k <h_\mu}\frac{h-k}{k^p}}^{\eqqcolon  \widetilde{\eaf}(h)}+\sum_{\substack{n> 0\\ 1\le i,j\le h }} \frac{1}{(j+i-1-h+nN)^p}\\
        &= \widetilde{\eaf}(h)+ \int_{0}^{+\infty}\frac{\mr e^{-\alpha}\alpha^{p-1}}{(1-\mr e^{-\alpha})^2}\frac{(1-\mr e^{-\alpha h})^2}{1-\mr e^{-\alpha N}}\mr e^{-\alpha (N-h)}\frac{\mr d\alpha}{\Gamma(p)}.
    \end{split}
\end{equation}
The first summand also admits an integral representation, but this is irrelevant to this lemma since it only matters that it is independent of $N$. The second summand can instead be treated by reasoning as before, thus getting 
\begin{equation}
    \big| E^{\operatorname{af}}_{\mu,\mu;N}(\underline{\sigma})- E^{\operatorname{af}}_{\mu,\mu;N'}(\underline{\sigma}')\big|\le CN^{-p}.
\end{equation}
Summing over all $\mu$ we get a factor $O(N)$, and thus the final estimate is $O(N^{1-p})$.
\end{proof}

\begin{rmk}\label{rmk:localization}
    Applying Lemma \ref{lemma} iteratively $S-1$ times to defects $D_S, D_{S-1},\ldots, D_2$, in order to decouple all of them, one gets
    \begin{equation}
    E_N(\underline{\sigma}) = \sum_{i=1}^S E_{N_i}(\underline{\delta}_i)-\sum_{i=2}^{S} E_{M''_ih}(h,\ldots, h)+\mr O(\min_i M_i)^{1-p}
    \end{equation}
    where 
    \begin{equation}
        \begin{split}
            N_i&=Mh+2h\sum_{j=i+1}^S\lfloor D_j/2h\rfloor +D_i+ 2h\bigg\lfloor\sum_{j=1}^{i-1}D_j/2h\bigg\rfloor, \\
            M''_i&=Mh+2h\sum_{j=i}^S\lfloor D_j/2h\rfloor+ 2h\bigg\lfloor\sum_{j=1}^{i-1}D_j/2h\bigg\rfloor .
        \end{split}
    \end{equation} 
If now we specialize to the case of $h=h^\star $, subtracting $Ne(h^\star )$ to both sides we get
    \begin{equation*}
    F_N(\underline{\sigma}) = \sum_{i=1}^S F_{N_i}(\underline{\delta}_i)+\mr O(\min_i M_i)^{1-p}.
    \end{equation*}
Indeed, for $i=2,\ldots, S$
\begin{equation}\label{eq:LHS1}
    \begin{split}
         E_{N_i}(\underline{\delta}_i)-E_{M''_ih}(h,\ldots, h)&=E_{N_i}(\underline{\delta}_i)-M''_ie(h) \\
         &=E_{N_i}(\underline{\delta}_i)-M''_ie(h^\star ) \pm N_i e(h^\star )  \\
         &=F_{N_i}(\underline{\delta}_i)+\big(D_i-2h^\star \lfloor D_i/2h\rfloor\big)e(h^\star ),
    \end{split}
\end{equation}
\begin{equation}\label{eq:LHS2}
    \begin{split}
         E_{N_1}(\underline{\delta}_i) &=F_{N_1}(\underline{\delta}_i)+N_1 e(h^\star )\\
         &=F_{N_1}(\underline{\delta}_i)+ \Big(Mh^\star +2h^\star \sum_{j\ge 2}\lfloor D_j/2h^\star \rfloor+D_1\Big)e(h^\star ).
         \end{split}
\end{equation}
Thus, recalling that $N=Mh^\star +D=Mh^\star +\sum_iD_i$, the sum of (\ref{eq:LHS1}) and (\ref{eq:LHS2}) is exactly
\begin{equation}
    \sum_{i=1}^SF_{N_i}(\underline{\delta}_i)+Ne(h^\star ).
\end{equation}
\end{rmk}

\begin{lemma}[Localization]\label{lemma:localization}
Given an array $\underline{D}$ of length $D$, let $\underline{\delta}$ be a state of the form $(\underline{D},\underline{P})$ with  $\underline{P}$ made of $M$ $h$-blocks and let $\underline{\delta}'$ be a state of the form $(\underline{D},\underline{P}')$ with the same $\underline{D}$ but $\underline{P}'$ made of $M'>M$ $h$-blocks.  Then,
 \begin{equation}
     \big|E_N(\underline{\delta})-E_{N'}(\underline{\delta}')-\big( E_{Mh}(h,\ldots,h)-E_{M'h}(h,\ldots, h)\big)\big|\le CM^{-p}M',
 \end{equation}
 where $N=Mh+D$, $ N'=M'h+D$ and the constant $C$ is independent of $N$. Specializing to $h=h^\star$ and adding and subtracting $De(h^\star)$ the previous equation becomes
 \begin{equation}
     \big|F_N(\underline{\delta})-F_{N'}(\underline{\delta}')\big|\le C M^{-p}M'.
 \end{equation}
\end{lemma}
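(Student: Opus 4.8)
The plan is to follow closely the proof of the decoupling Lemma~\ref{lemma}, exploiting that $\underline{\delta}$ and $\underline{\delta}'$ carry the \emph{same} defect $\underline{D}$ and differ only in the number of $h$-blocks of their periodic part. I first dispose of the ferromagnetic energy. Writing $\efer=-JN+2JW$, where $W$ is the number of domain walls, which equals the number of blocks on the torus, and noting that passing from $M$ to $M'$ periodic blocks adds exactly $M'-M$ walls and $(M'-M)h$ sites while leaving the two junctions with $\underline{D}$ unchanged, one gets $\efer_N(\underline{\delta})-\efer_{N'}(\underline{\delta}')=J(M-M')(2-h)$. The same quantity equals $\efer_{Mh}(h,\ldots,h)-\efer_{M'h}(h,\ldots,h)$, so the ferromagnetic contribution to the left-hand side vanishes identically and only the antiferromagnetic part must be estimated, exactly as in Lemma~\ref{lemma}.

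For the antiferromagnetic part I would use the block-pair representation \eqref{eq:eaf} and split the interacting pairs of $\underline{\delta}$ and $\underline{\delta}'$ into those involving at least one block of $\underline{D}$ and those between two periodic blocks, the two purely periodic states containing only the latter. The target quantity then regroups as $(A_{\delta}-A_{\delta'})+(B_M^{(N)}-B_M^{(Mh)})-(B_{M'}^{(N')}-B_{M'}^{(M'h)})$, where $A$ denotes the defect-involving energy and $B_{\bullet}^{(\cdot)}$ the periodic-periodic energy of $\bullet$ blocks on the torus of the indicated size. The essential structural fact, to be made precise, is that when one passes from $\underline{\delta}$ to $\underline{\delta}'$ (and from $Mh$ to $M'h$) the short-way distances internal to $\underline{D}$ and the distances from $\underline{D}$ to its nearest periodic neighbours are preserved; only the torus size and the long-way distances change. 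Every such change is therefore controlled by the torus-factor estimate \eqref{eq:factor}, which after the scaling $t=\alpha N'$ integrates against $\mr e^{-\alpha}\alpha^{p-1}$ to a quantity of order $(N')^{-p}$, together with long-way exponentials of the form $\mr e^{-\alpha\,\Theta(Mh)}$.

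The defect-involving difference $A_{\delta}-A_{\delta'}$ I would bound by the single-block/periodic identity \eqref{eq:energy_single_block_with_periodic}: there are only $O(1)$ defect blocks, their geometric sums over the periodic part converge, and the difference carries either an $\mr e^{-\alpha\,\Theta(Mh)}$ factor or the torus factor, yielding $O(M^{-p})$. The genuine source of the error is the periodic self-interaction. Here the closed form \eqref{eq:same_periodic_part} collapses the double sum over periodic pairs into a single sum, and the estimate \eqref{eq:same_periodic_part2}, applied with its wraparound term $1+\mr e^{-\alpha(N-Mh)}$ and comparing $N$ with $Mh$ (resp.\ $N'$ with $M'h$), bounds $B_M^{(N)}-B_M^{(Mh)}$ by a sum of $O(M)$ terms each of size $O((Mh)^{-p})$, hence by $O(M^{1-p})$, and likewise $B_{M'}^{(N')}-B_{M'}^{(M'h)}=O((M')^{1-p})$. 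The diagonal terms $\mu=\nu$ are handled as in \eqref{eq:same_block}, their $N$-independent parts $\widetilde{\eaf}(h)$ cancelling exactly in the regrouped combination and their wraparound parts contributing $O(M^{1-p})$. Since $M\le M'$, both $M^{1-p}=M^{-p}M\le M^{-p}M'$ and $(M')^{1-p}=(M')^{-p}M'\le M^{-p}M'$, so all contributions are dominated by $CM^{-p}M'$, giving the first inequality.

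The second inequality is then immediate: for $h=h^\star$ one has $E_{Mh^\star}(h^\star,\ldots,h^\star)=Mh^\star e(h^\star)$, so $E_{Mh^\star}(h^\star,\ldots,h^\star)-E_{M'h^\star}(h^\star,\ldots,h^\star)=(M-M')h^\star e(h^\star)$, while $F_N(\underline{\delta})-F_{N'}(\underline{\delta}')=\big(E_N(\underline{\delta})-E_{N'}(\underline{\delta}')\big)+(M'-M)h^\star e(h^\star)$ because $N'-N=(M'-M)h^\star$; the two expressions coincide with the bracketed quantity of the first estimate. The step I expect to be the main obstacle is the bookkeeping of which distances are preserved and which vary among the four states, and the verification that each varying term carries either the $(N')^{-p}$ torus factor or a sufficiently strong long-way exponential, so that the accumulated $O(M)$ (resp.\ $O(M')$) pairwise errors assemble into $M^{-p}M'$ rather than the naive $O((M')^{2-p})$ that a crude count of all pairs would suggest; it is precisely the geometric collapse in \eqref{eq:same_periodic_part} that reduces the quadratic count to a linear one.
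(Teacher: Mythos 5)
Your ferromagnetic computation, the split of the antiferromagnetic pairs into defect-involving and periodic--periodic groups, the treatment of the defect and diagonal terms, and the final reduction of the $F_N$ statement to the $E_N$ statement are all sound, and in skeleton you follow the paper's route (closed forms \eqref{eq:same_periodic_part}--\eqref{eq:same_block} plus the torus-factor estimate \eqref{eq:factor}). The genuine gap is the claim that $B_M^{(N)}-B_M^{(Mh)}=O(M^{1-p})$ and $B_{M'}^{(N')}-B_{M'}^{(M'h)}=O((M')^{1-p})$ \emph{separately}. This is false whenever $D\ge 1$: replacing the fully periodic state on the torus of size $Mh$ by the periodic part of $\underline{\delta}$ on the torus of size $N=Mh+D$ does not merely change the torus factor, it shifts every ``long-way'' distance by $D$, and for pairs adjacent to the seam (the first and last blocks of $\underline{P}$ are at distance $0$ in the periodic state but at distance $D$ in $\underline{\delta}$) this changes the pair energy by a quantity of order one, uniformly in $M$. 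Concretely, the closed form \eqref{eq:same_periodic_part} (with wraparound $N-Mh=D$ for $\underline{\delta}$ and $0$ for the periodic state) gives
\begin{equation*}
B_M^{(N)}-B_M^{(Mh)}=\int_0^{+\infty}\frac{\mr e^{-\alpha}\alpha^{p-1}}{(1-\mr e^{-\alpha})^2}\,
\frac{1-\mr e^{-\alpha h}}{1+\mr e^{-\alpha h}}\,
\frac{(1-\mr e^{-\alpha D})(1+\mr e^{-\alpha Mh})}{1-\mr e^{-\alpha (D+Mh)}}\,\frac{\mr d\alpha}{\Gamma(p)},
\end{equation*}
which converges, as $M\to\infty$, to a strictly positive constant depending on $D$ and $h$; equivalently, if you apply \eqref{eq:same_periodic_part2} to the pair $(N,Mh)$, the factor $\big|\frac{1+\mr e^{-\alpha D}}{1-\mr e^{-\alpha N}}-\frac{2}{1-\mr e^{-\alpha Mh}}\big|$ is of order $1-\mr e^{-\alpha D}$ for $\alpha$ of order one, so its integral against $\mr e^{-\alpha}\alpha^{p-1}$ is $O(1)$, not $O((Mh)^{-p})$, and your bound for this group degenerates to $O(M)\cdot O(1)$.

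The repair, which is exactly what the paper's single displayed chain of inequalities does, is to pair states with the \emph{same} defect content, so that this $M$-independent, $D$-dependent seam contribution cancels: estimate $B_M^{(N)}-B_{M'}^{(N')}$ (both states contain $\underline{D}$, so the common factor $1+\mr e^{-\alpha D}$ factors out) and, ``analogously'' for the right-hand side, $B_M^{(Mh)}-B_{M'}^{(M'h)}$. Only in such differences do the numerators combine into $(\mr e^{-\alpha M'h}-\mr e^{-\alpha Mh})(1-\mr e^{-\alpha D})$, and only this exponential factor $\mr e^{-\alpha Mh}$ produces $O(M^{-p})$ after scaling. In your grouping this amounts to estimating the full combination $(B_M^{(N)}-B_M^{(Mh)})-(B_{M'}^{(N')}-B_{M'}^{(M'h)})$ at once, using that the two brackets are the same integral with $M$ replaced by $M'$; neither bracket is small by itself. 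Your closing remark correctly identifies the bookkeeping of preserved distances as the crux, but resolves it the wrong way around: the distances involving $\underline{D}$ are preserved in the pair $(\underline{\delta},\underline{\delta}')$, not in the pair $(\underline{\delta},\hbox{fully periodic})$, and it is the former comparison on which the proof must be built.
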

\begin{proof} As in the previous lemma, both sides have the same ferromagnetic energy. As for the antiferromagnetic energy, recalling (\ref{eq:same_periodic_part}), we can estimate
\begin{equation}
 \begin{split}
       &\bigg| \sum_{\mu,\nu\in \underline{P},\mu\ne \nu}\eaf_{\mu,\nu;N}(\underline{\delta})- \sum_{\mu,\nu\in \underline{P}',\mu\ne \nu}\eaf_{\mu,\nu;N'}(\underline{\delta}') \bigg|\\
    &\le  h^2 \int_0^{+\infty}\mr e^{-\alpha}\alpha^{p-1}\Big|\frac{1-\mr e^{-\alpha Mh}}{1-\mr e^{-\alpha h}}\frac{1+\mr e^{-\alpha D}}{1-\mr e^{-\alpha (D+Mh)}} -\frac{1-\mr e^{-\alpha M'h}}{1-\mr e^{-\alpha h}}\frac{1+\mr e^{-\alpha D}}{1-\mr e^{-\alpha (D+M'h)}}\Big|\frac{\mr d\alpha}{\Gamma(p)}\\
        &\le 2 h^2 \int_0^{+\infty}\mr e^{-\alpha}\alpha^{p-1}\Big|\frac{\mr e^{-\alpha M'h}-\mr e^{-\alpha Mh}+\mr e^{-\alpha(D+Mh)}-\mr e^{-\alpha(D+M'h)}}{(1-\mr e^{-\alpha h})(1-\mr e^{-\alpha (D+Mh)})(1-\mr e^{-\alpha (D+M'h)})}\Big|\frac{\mr d\alpha}{\Gamma(p)}\\
    &= 2 h^2 \int_0^{+\infty}\mr e^{-\alpha}\alpha^{p-1}\Big|\frac{(\mr e^{-\alpha M'h}-\mr e^{-\alpha Mh})(1-\mr e^{-\alpha D})}{(1-\mr e^{-\alpha h})(1-\mr e^{-\alpha (D+Mh)})(1-\mr e^{-\alpha (D+M'h)})}\Big|\frac{\mr d\alpha}{\Gamma(p)}\\
        &\le 2 D h^2 \int_0^{+\infty}\mr e^{-\alpha}\alpha^{p-1} \frac{\mr e^{-\alpha Mh}-\mr e^{-\alpha M'h}}{ (1-\mr e^{-\alpha (D+Mh)}) (1-\mr e^{-\alpha (D+M'h)})}\frac{\mr d\alpha}{\Gamma(p)}\\
    &= 2 Dh^2 \int_0^{+\infty}\mr e^{-\alpha}\alpha^{p-1}\mr e^{-\alpha Mh} \frac{1-\mr e^{-\alpha (M'-M)h}}{ (1-\mr e^{-\alpha (D+Mh)}) (1-\mr e^{-\alpha (D+M'h)})} \frac{\mr d\alpha}{\Gamma(p)}  \\
        &\le 2D h^2 \int_0^{+\infty}\mr e^{-\alpha}\alpha^{p-1}\frac{\mr e^{-\alpha Mh} }{1-\mr e^{-\alpha (D+Mh)}}\frac{\mr d\alpha}{\Gamma(p)}\\
    &=2 Dh^2 M^{-p} \int_0^{+\infty}\mr e^{-t/M}t^{p-1}\frac{\mr e^{-t h} }{1-\mr e^{-t (D+Mh)/M}}\frac{\mr dt}{\Gamma(p)}\\
    &\le c M^{-p}.
 \end{split}
\end{equation}
When $\mu=\nu$ we recall Equation (\ref{eq:same_block}) and then we proceed as above. The right-hand side is treated analogously.
\end{proof}

\smallskip
We now introduce a function
$\phi$ which has the meaning of a minimal interaction energy between two ground states with a relative translation of $j$. It will appear as an energy density for the $\Gamma$-limit.

\begin{defn}[anti-phase energy density] For any $j\in\{1,\ldots, 2h^\star \}$, which we identify also with $\mbb Z/2h^\star \mbb Z$, we define
\[\phi(j)=\lim_{\buildrel {N\to +\infty} \over {N\equiv j \,\hbox{\scriptsize mod.}\, 2h^\star  }}
(E^\star _{N}-N e(h^\star )),
\]
which exists thanks to Remark \ref{rmk:limitfuncmin}.
\end{defn}

\begin{rmk}\label{rmk:limitfuncmin}
(i) The limit the definition of $\phi$ exists. To check this, let $\underline\sigma_K$ be a minimizer for $E_{2Kh^\star +j}$. We note that, by Theorem \ref{thm:asympt-1}(c), $\underline\sigma_K$ is equivalent to an array of the form 
\begin{equation*}
   ( \underline D_{1},\underline P_{1}\ldots, \underline D_{S},\underline P_{S}),
\end{equation*}
with $\underline P_{i}$ arrays composed of an even number $M_K$ of $h^\star$-blocks, while the  $\underline D_{i}$ can be anything and both $S$ and the their total length $D$ are equibounded in $K$. With fixed $K$, let $K'>K$ and let 
\begin{equation*}
    \underline\sigma\sim  ( \underline D_{1},\underline P_{1}\ldots, \underline D_{S},\underline P_{S}')
\end{equation*}
be the test state for $E_{2K'h^\star +j}$ constructed by adding $h^\star$-blocks to $\underline P_{S}$ till the desired length. We can apply Lemma \ref{lemma:localization} to $\underline\sigma$ and $\underline{\sigma}_K$ considering $\underline{\delta}\coloneqq ( \underline D_{1},\underline P_{1}\ldots, \underline D_{S})$, obtaining
\[
\min F_{2K'h^\star +j}\le F_{2K'h^\star +j}(\underline\sigma)\le \min F_{2Kh^\star +j}+ o(1)
\]
as $K'\to+\infty$ and $K\to+\infty$. It then suffices to take the limsup as $K'\to+\infty$ first, and then the liminf as $K\to+\infty$.

 (ii) We have $\phi(0)=0$ and $\phi(j)>0$ if $j\not\equiv 0$ modulo $2h^\star $. This is again a consequence of Theorem  \ref{thm:asympt-1}(c).
 
 (iii) The function $\phi$ is subadditive; that is, $\phi(j+k)\le \phi(j)+\phi(k)$ for all $j,k$. To check this, for all $K$ let $\underline\sigma^j_K $ and $\underline\sigma^k_K $ be minimizers for $E_{2Kh^\star+j}$ and $E_{2Kh^\star+k}$, respectively. Then, we can construct a test state $\underline\sigma= \underline\sigma_{K,K'}\sim (\underline\sigma^j_K, \underline P,\underline\sigma^k_K, \underline P )$ for $E_{4(K+K')h^\star+j+k}$ by inserting two equal sequences $\underline{P}$ of $2K'$ $h^\star$-blocks. By Lemma \ref{lemma}, we then have
\begin{equation*}
    \begin{split}
        \phi(j+k)&\le \liminf_{K'\to+\infty}F_{4(K+K')h^\star+j+k}(\underline\sigma_{K,K'})\\
        &\le \liminf_{M\to+\infty} (F_{2Mh^\star+j}(\underline\sigma^j_K,\underline{P}')+F_{2Mh^\star+k}(\underline\sigma^k_K,\underline{P}''))\\
&=\phi(j)+\phi(k)+o_K(1),
    \end{split}
\end{equation*}
where, with a small abuse of notation, we denoted by $\underline\sigma^j_K,\underline{P}'$ a state obtained by adding to $\underline\sigma^j_K$ the correct number of $h^\star$-block required by the lemma, and analogously for  $\underline\sigma^k_K$. The last equality follows by (i).
\end{rmk}

\begin{thm} Let $j\in\{1,\ldots, 2h^\star \}$; then there exists the $\Gamma$-limit
    \[\Gamma\hbox{-}\lim_{\buildrel {N\to +\infty} \over {N\equiv j \,\hbox{\scriptsize mod.}\, 2h^\star  }} F_N = F^j_{\infty},\] where $F^j_{\infty}$ is defined on piecewise-constant functions $r\colon \mbb T\to \mbb Z/2h^\star \mbb Z$ by
    \[
    F^j_{\infty}(r)=\begin{cases}\displaystyle\sum_{x\in J(r)}\phi(\Delta r(x)\big) & \hbox{if } \displaystyle\sum_{x\in J(r)}\Delta r(x)\equiv j \hbox{ modulo } 2h^\star \\
    +\infty &  \hbox{otherwise,}\end{cases}\] $J(r)$ is the set of discontinuity points of $r$ and $\Delta r(x)=r(x+)-r(x-)$ is the jump size at $x$. 
\end{thm}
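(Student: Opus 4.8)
\emph{Overview.} The plan is to establish the two $\Gamma$-convergence inequalities along the restricted sequence $N\equiv j \pmod{2h^\star}$, with respect to the convergence of spin states introduced above, after recording compactness and the topological constraint. Throughout I would use the decomposition of an equibounded-energy state into defects alternated with $h^\star$-periodic parts, as guaranteed by Theorem \ref{thm:asympt-1}(c), together with the iterated decoupling estimate of Remark \ref{rmk:localization} and the properties of $\phi$ collected in Remark \ref{rmk:limitfuncmin}.

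\emph{Compactness and the constraint.} First I would note that if $F_N(\underline\sigma^N)$ is equibounded, then by Theorem \ref{thm:asympt-1}(c) the total defect length, and hence the number $S$ of defects, is bounded uniformly in $N$; extracting a subsequence along which the finitely many rescaled defect positions and the phases $R_i \bmod 2h^\star$ stabilize produces a limit $r$ in the sense defined above. The constraint is then purely combinatorial: since each maximal periodic part $\underline G_i$ has length a multiple of $2h^\star$, crossing the $i$-th defect shifts $R_i \bmod 2h^\star$ by exactly $D_i \bmod 2h^\star$, so the macroscopic jump at the corresponding point equals $D_i \bmod 2h^\star$; summing around the torus and using $\sum_i|\underline G_i|\equiv 0$ and $N=\sum_i(|\underline G_i|+D_i)$ gives $\sum_{x\in J(r)}\Delta r(x)\equiv \sum_i D_i \equiv N \equiv j \pmod{2h^\star}$. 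Hence every $r$ that arises as such a limit satisfies the constraint, so the liminf inequality is vacuous when $r$ violates it (and then $F^j_\infty(r)=+\infty$ is the correct value), while for the limsup the recovery for such $r$ is the trivial $+\infty$ sequence.

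\emph{Liminf inequality.} Let $\underline\sigma^N\to r$ with $\liminf_N F_N(\underline\sigma^N)<+\infty$; passing to a subsequence attaining the liminf I may assume equibounded energy, so the defect decomposition applies and $\min_i M_i\to+\infty$ because the jump points of $r$ sit at distinct macroscopic positions. Remark \ref{rmk:localization} with $h=h^\star$ gives $F_N(\underline\sigma^N)=\sum_{i=1}^S F_{N_i}(\underline\delta_i)+\mr O(\min_i M_i)^{1-p}$, and the error vanishes. Each single-defect term obeys $F_{N_i}(\underline\delta_i)\ge E^\star_{N_i}-N_i e(h^\star)$ with $N_i\equiv D_i\pmod{2h^\star}$ and $N_i\to+\infty$, so the definition of $\phi$ yields $\liminf_N\big(E^\star_{N_i}-N_ie(h^\star)\big)=\phi(D_i\bmod 2h^\star)$. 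Grouping the microscopic defects according to the macroscopic jump point of $r$ onto which they converge, and applying the subadditivity of Remark \ref{rmk:limitfuncmin}(iii) in the form $\sum_{i}\phi(D_i)\ge\phi\big(\sum_i D_i\big)$ within each group, I obtain $\liminf_N F_N(\underline\sigma^N)\ge\sum_{x\in J(r)}\phi(\Delta r(x))=F^j_\infty(r)$.

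\emph{Limsup inequality.} Given $r$ with $F^j_\infty(r)<+\infty$, I would list its jumps $\Delta r(x_k)=j_k$, so that $\sum_k j_k\equiv j$. For each $k$ I choose, by the definition of $\phi$ and Remark \ref{rmk:limitfuncmin}(i), a single-defect state $\underline\delta^k$ of phase $\equiv j_k$ whose renormalized energy is within $1/m$ of $\phi(j_k)$, then concatenate these defects at the rescaled positions $x_k$, separating and padding them with $h^\star$-periodic parts until the total number of sites is exactly $N$. This is feasible for all large $N\equiv j$ precisely because $\sum_k j_k\equiv j$ forces the residual length to be filled to be a multiple of $2h^\star$. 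By construction $\underline\sigma^N\to r$, and Remark \ref{rmk:localization} gives $F_N(\underline\sigma^N)=\sum_k F(\underline\delta^k)+o(1)$ as the separating periodic parts grow; a diagonal choice $m=m(N)\to\infty$ then yields $\limsup_N F_N(\underline\sigma^N)\le\sum_k\phi(j_k)=F^j_\infty(r)$.

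\emph{Main obstacle.} The delicate point will be the liminf, where one must simultaneously justify that $\min_iM_i\to+\infty$ (so the decoupling error is negligible) and handle microscopic defects that coalesce onto a single macroscopic jump point; the latter is exactly where the subadditivity of $\phi$ enters and supplies the inequality in the correct direction. Conceptually, the subtle ingredient is the winding bookkeeping producing the global constraint $\sum_{x\in J(r)}\Delta r(x)\equiv j$, but once it is in place the statement reduces to the additive and subadditive behaviour already encoded in Lemma \ref{lemma} and Remark \ref{rmk:limitfuncmin}.
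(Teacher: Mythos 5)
Your proposal follows essentially the same route as the paper's own proof: decompose an equibounded-energy sequence into defects alternating with $h^\star$-periodic parts (via Theorem \ref{thm:asympt-1}(c)), decouple the defects with the iterated estimate of Remark \ref{rmk:localization}, bound each single-defect energy from below by $E^\star_{N_i}-N_ie(h^\star)$ and invoke the definition of $\phi$, then group the defects accumulating at a common macroscopic point and use the subadditivity of Remark \ref{rmk:limitfuncmin}(iii); the recovery sequence is likewise the paper's concatenation of near-optimal defect states separated by periodic parts of diverging length, with the $1/m$-plus-diagonalization argument replacing the paper's fixed $\eta$. The derivation of the constraint $\sum_{x\in J(r)}\Delta r(x)\equiv\sum_i D_i\equiv N\equiv j$ is also the paper's, if anything spelled out slightly more explicitly.

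There is, however, one genuine gap, in the liminf. You justify $\min_i M_i\to+\infty$ ``because the jump points of $r$ sit at distinct macroscopic positions''. This is a non sequitur: the distinctness of the macroscopic jump points of the limit function says nothing about the microscopic separation of the defects of $\underline{\sigma}^N$. Two defects separated by a single pair of $h^\star$-blocks (i.e.\ $M_i=2$ for all $N$) are perfectly compatible with equibounded energy; they simply converge to the same macroscopic point --- exactly the coalescence scenario you yourself allow for when invoking subadditivity. For such sequences $\min_i M_i$ stays bounded, the error term $\mr O(\min_i M_i)^{1-p}$ in Remark \ref{rmk:localization} does not vanish, and the decoupling step as you wrote it fails. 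The repair is the paper's short but necessary observation: whenever some $M_{i;N}$ stays bounded, absorb that periodic part into an adjacent defect, producing a coarser decomposition whose total defect length is still uniformly bounded (only finitely many bounded periodic parts are absorbed), and then pass to a further subsequence along which every surviving $M_{i;N}\to+\infty$. With this modification in place of your stated justification, the remainder of your liminf argument, and your limsup argument as written, go through exactly as in the paper.
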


\begin{proof} Let $j\in\{1,\ldots, 2h^\star\}$ be fixed, and consider $N\equiv j$ modulo $2h^\star$.
    Given $(\underline{\sigma}_N)_N$ equibounded and converging to some $r$, we can write, up to possibly passing to a subsequence in order to have $S$ independent of $N$, 
    \begin{equation*}
        \underline{\sigma}_N=(\underline{D}_{1;N}, \underline{H}_{1;N},\ldots, \underline{D}_{S;N},\underline{H}_{S;N}),
    \end{equation*}
    with the total length $D_N$ of the defects uniformly bounded, say by $D_0$, while the total length of $\underline{H}_{i;N}$, say $M_{i;N}h^\star$, goes to infinity for every $i$ (note indeed that if some $M_{i;N}$ stays bounded we can simply include it into an adjacent defect). Since $M_{i;N}$ are taken even, we have that $D_N\equiv j$ modulo $2h^\star$, which implies that
\begin{equation}\label{j-cond}
 \sum_{x\in J(r)}\Delta r(x)\equiv j \hbox{ modulo } 2h^\star.   
\end{equation}
Using the notation of Lemma \ref{lemma}, by Remark \ref{rmk:localization}, we have
    \begin{equation*}
        \begin{split}
            F_N(\underline{\sigma}_N)\ge \sum_{i=1}^S F_{N_i}(\underline{D}_i,\underline{H}'_{i;N})+\mr O (\min_{i}M_{i;N})^{1-p}.
        \end{split}
    \end{equation*}
    We can suppose, up to subsequences, that the length of $\underline D_{i,N}$ is converging to some $j_i$ modulo $2h^\star$,
    and, using $(\underline{D}_{i,N},\underline{H}'_{i;N})$ as a test state for $E^\star_{N_i}$ we have
      \begin{equation*}
        \begin{split}
            \liminf_{N\to+\infty} F_N(\underline{\sigma}_N)\ge \sum_{i=1}^S \phi(j_i).        \end{split}
    \end{equation*}  
    The desired lower bound now follows by the subadditivity of $\phi$, noting that, by the hypothesis of convergence to $r$,  for all $x\in J(r)$ we have that
    the sum of all $j_i$ corresponding to $x$ is equal to $\Delta r(x)$ modulo $2h^\star$.
%
%

\smallskip
To prove the upper bound for the $\Gamma$-limit, we
consider $r\colon \mbb T\to \mbb Z/2h^\star \mbb{Z}$, with a finite number $S$ of discontinuities, say at points $x_1, \ldots, x_S$ and of jump size $\Delta r(x_k)$, $k\in\{1,\ldots, S\}$, and satisfying \eqref{j-cond}. We fix $\eta>0$ and for each $k$ we take a state $\underline{\sigma}_k$ and $N_k\equiv \Delta r(x_k)$ modulo $2h^\star $ such that
\[
F_{N_k}(\underline{\sigma}_k)=E_{N_k}(\underline{\sigma}_k)- N_k e(h^\star )\le \phi(\Delta r(x_i))+\eta
\]
In the notation above, we identify $\underline{\sigma}_k$ with a defect $\underline{D}_k$, and for $N\equiv j$ modulo $2h^\star $ we can construct $\underline{\sigma}_N$ equivalent to $(\underline{D}_1,\underline{H}_{N,1},\ldots,\underline{D}_S,\underline{H}_{N,S})$ and such that  $\underline{\sigma}_N$ converges to $r$, with each $\underline{H}_{N,k}$ equal to an even number of $2h^\star $. This is possible thanks to condition \eqref{j-cond}. Note that we can assume that the length of each $\underline{H}_{N,k}$ tends to $+\infty$, so that again by Remark  \ref{rmk:localization} we have
 \[
\limsup_{N\to+\infty}F_N(\underline{\sigma}_N)\le   \sum_{k=1}^S \limsup_{N\to+\infty}
F_{N'_k}(\underline{D}_k,\underline{H}_{N,k})\le \sum_{k=1}^S \phi(\Delta r(x_i))+ S\eta
\]
of length equivalent to $\Delta r(x_k)$ modulo $2h^\star $. By the arbitrariness of $\eta>0$ this concludes the proof.
%
%
%
\end{proof}

\noindent{\bf Acknowledgments.} AB is a member of GNAMPA, INdAM, and partially supported by the MIUR Excellence Department Project
MatMod@TOV awarded to the Department of Mathematics, University of Rome Tor Vergata. FC is a member of GNFM, INdAM.

\bibliographystyle{abbrv}

\bibliography{references}

\end{document}